\newtheorem{theorem}{Theorem}
\newtheorem*{xremark}{Remark}
\newtheorem{remark}{Remark}
\newtheorem{proposition}{Proposition}
\newtheorem{lemma}{Lemma}
\newtheorem{nc}{Normal Comparison Result}
\newtheorem{na}{Normal Approximation Result}
\newtheorem{es}{Exponential Sum Result}
\newcommand{\N}{\mathbb{N}}
\newcommand{\R}{\mathbb{R}}
\newcommand{\E}{\mathbb{E}}
\newcommand{\p}{\mathbb{P}}
\newcommand{\Z}{\mathbb{Z}}
\newcommand{\C}{\mathbb{C}}
\newcommand{\e}{\mathrm{e}}
\newcommand{\Mod}[1]{\ \mathrm{mod}\ #1}
\newcommand*\conj[1]{\overline{#1}}
\begin{document}
\author{Seth Hardy}
\title[Exponential sums with random multiplicative coefficients]{Bounds for exponential sums with random multiplicative coefficients}
\address{Mathematics Institute, Zeeman Building, University of Warwick, Coventry CV4 7AL, England}
\email{Seth.Hardy@warwick.ac.uk}
\date{\today}
\thanks{The author is supported by the Swinnerton-Dyer scholarship at the Warwick Mathematics Institute Centre for Doctoral Training.}
\begin{abstract}
For $f$ a Rademacher or Steinhaus random multiplicative function, we prove that
\[ \max_{\theta \in [0,1]} \frac{1}{\sqrt{N}} \Bigl| \sum_{n \leq N} f(n) \e (n \theta) \Bigr| \gg \sqrt{\log N} ,\]
asymptotically almost surely as $N \rightarrow \infty$. Furthermore, for $f$ a Steinhaus random multiplicative function, and any $\varepsilon > 0$, we prove the partial upper bound result
\[ \max_{\theta \in [0,1]} \frac{1}{\sqrt{N}} \Bigl| \sum_{\substack{n \leq N \\ P(n) \geq N^{0.8}}} f(n) \e (n \theta) \Bigr| \ll {(\log N)}^{7/4 + \varepsilon},\]
asymptotically almost surely as $N \rightarrow \infty$, where $P(n)$ denotes the largest prime factor of $n$.
\end{abstract}
\maketitle
\section{Introduction}
\subsection{Background}
Let $\mathbf{X} = {(X_n)}_{n \in \N}$ be a sequence of independent random variables with zero mean and unit variance. For many classical choices of $\mathbf{X}$, the limiting behaviour of the random trigonometric polynomials \begin{equation*}Q_{\mathbf{X},N} (\theta) = \frac{1}{\sqrt{N}} \sum_{n \leq N} X_n \e (n \theta), \, \e(x) \coloneqq e^{2\pi i x}, \end{equation*} has been studied: see~\cite{kahane} for examples. A revolutionary paper in this area came from~\citet{SZ}, who studied the case where $\mathbf{X} = {(X_n)}_{n \in \N}$ is a sequence of independent Rademacher random variables; that is, $X_n$ take values $\pm 1$ with equal probability. One of their main results~\cite[Theorem~3.5.2]{SZ} showed that, in this case, $Q_{\mathbf{X},N} (\theta)$ obeys a central limit theorem, establishing that for almost all Rademacher sequences, for $\Theta$ a random variable uniformly distributed on $[0,1]$, $Q_{\mathbf{X},N} (\Theta)$ converges in distribution to standard complex normal random variable. They also showed that $\max_{\theta \in [0,1]} \frac{1}{\sqrt{N}} \bigl| \sum_{n \leq N} X_n \e (n \theta) \bigr| \asymp \sqrt{\log N}$, almost surely, when $N$ is sufficiently large. \\

Recently,~\citet{BNR} investigated whether the behaviour observed by Salem and Zygmund persists when we remove independence between the coefficients $\mathbf{X}={(X_n)}_{n \in \N}$, instead endowing them with a multiplicative dependence structure. More precisely, they studied the sum
\[ P_N (\theta) = \frac{1}{\sqrt{N}} \sum_{n \leq N} f(n) \e (n\theta), \]
where $f : \N \rightarrow \C$ is a Rademacher or a Steinhaus random multiplicative function, defined shortly. They found that $P_N (\theta)$ satisfies a central limit theorem, specifically showing that for almost all Rademacher or Steinhaus random multiplicative functions $f$, for $\Theta$ a uniform random variable on $[0,1]$, $P_N (\Theta)$ converges in distribution to a standard complex normal random variable as $N \rightarrow \infty$~\cite[Theorem~1.2]{BNR}. \\

A Rademacher random multiplicative function $f$ is defined as follows: let ${( f(p) )}_{p \text{ prime}}$ be a sequence of independent Rademacher random variables, set $f(p^\alpha) = 0$ for any $\alpha \geq 2$, and extend this function by multiplicativity so that \[ f(n) = \prod_{p \mid n} f \bigl(p^{v_p (n)} \bigr),\] where $v_p (n)$ is the $p \,$-adic valuation of $n$. Similarly, we define a Steinhaus random multiplicative function $f$ by taking ${(f(p))}_{p \text{ prime}}$ to be independent Steinhaus random variables; that is, random variables uniformly distributed on the unit circle $\{ z \in \mathbb{C} : \, |z| = 1 \}$, and setting \[ f(n) = \prod_{p \mid n} {f(p)}^{v_p(n)}. \] For a detailed overview of the literature on random multiplicative functions see the introduction to~\cite{harperlm} and the references therein. \\

To establish the aforementioned central limit theorem for $P_N (\theta)$,~\citet{BNR} studied the moments $\int_{0}^{1} \E|P_N (\theta)|^{2k} d \theta$, reducing the problem to counting solutions to the system of equations:
\begin{align*}
m_1 \ldots m_k &= m_{k+1} \ldots m_{2k} \\
m_1 + \cdots + m_k &= m_{k+1} + \cdots + m_{2k} ,\end{align*}
for $m_i \leq N$. They found that, when $k$ is of reasonable size relative to $N$, the dominant contribution to this count comes from the diagonal terms where $(m_1,\ldots,m_k)$ is just a permutation of $(m_{k+1},\ldots,m_{2k})$. This contribution agrees with Gaussian moments, giving convergence in distribution by the method of moments\footnote{They also proved that the contribution from off-diagonal terms begins to dominate when $k$ is sufficiently large, showing that the tails are not at all Gaussian.}. Furthermore, they used their moment estimates to investigate \emph{asymptotic almost sure} bounds for $\max_{\theta \in [0,1]} | P_N (\theta)|$, meaning that they obtained upper and lower bounds that hold with probability $1-o(1)$ as $N \rightarrow \infty$. Specifically, they showed (\cite[Theorem~1.3]{BNR}) that
\[ {\biggl(\frac{\log N}{\log_2 N} \biggr)}^{1/6} \leq \max_{\theta \in [0,1]} |P_N (\theta) | \leq \exp\bigl(3 \sqrt{ \log N \log_2 N} \bigr), \]
asymptotically almost surely, where $\log_2 N \coloneqq \log \log N$. In this paper we improve on their lower bound and show the following:
\begin{theorem}\label{T:lb} Let $f$ be a Rademacher or Steinhaus random multiplicative function. We have
\[ \sqrt{\log N} \ll \max_{\theta \in [0,1]} |P_N (\theta)|, \]
with probability $1 - o(1)$ as $N \rightarrow \infty$. In the Rademacher and Steinhaus cases, we calculate that the implicit constants can be taken to be $\frac{4 \sqrt{6}}{29 \pi}$ and $\frac{4}{29}$ respectively.
\end{theorem}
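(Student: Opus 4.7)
The plan is to compare $\max_\theta |P_N(\theta)|$ to a high $L^{2k}$-norm of $P_N$ and then control this norm via a second-moment concentration argument, with $k$ of order $\log N$. Writing
\[
Z_k := \int_0^1 |P_N(\theta)|^{2k}\, d\theta,
\]
we have $\max_{\theta \in [0,1]} |P_N(\theta)| \geq Z_k^{1/(2k)}$ deterministically. Setting $k = \lfloor c \log N \rfloor$ for a suitably small $c > 0$, Stirling gives $(k!)^{1/(2k)} \sim \sqrt{k/e} \asymp \sqrt{\log N}$, so it is enough to show $\E Z_k \gtrsim k!$ (Steinhaus) or $\E Z_k \gtrsim (6/\pi^2)^k k!$ (Rademacher), together with $Z_k \geq (1-o(1))\E Z_k$ with probability $1-o(1)$. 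The Rademacher factor $\sqrt{6}/\pi$ will arise precisely from the squarefree density $6/\pi^2$.

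The moment lower bound is obtained by following the framework from~\cite{BNR}. Expanding $|P_N(\theta)|^{2k}$ and applying Fubini in $\theta$ gives
\[
\E Z_k = \frac{1}{N^k} \sum_{\substack{m_1,\ldots,m_{2k} \leq N \\ m_1 + \cdots + m_k = m_{k+1} + \cdots + m_{2k}}} \E\bigl[ f(m_1)\cdots f(m_k)\overline{f(m_{k+1})\cdots f(m_{2k})}\bigr].
\]
The expectation selects tuples whose prime-power valuations match on both sides (with squarefreeness for Rademacher). The ``diagonal'' subfamily, in which $(m_{k+1},\ldots,m_{2k})$ is a permutation of $(m_1,\ldots,m_k)$, is automatically admissible and, together with a standard count of the distinct-coordinate contribution, produces $(1-o(1))\,k!$ (respectively $(1-o(1))(6/\pi^2)^k k!$); off-diagonal multiplicative matches are negligible for $k \leq c\log N$ by direct counting as in~\cite{BNR}.

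To upgrade the mean to a high-probability lower bound, the plan is to bound the variance of $Z_k$ by expanding
\[
\E Z_k^2 = \frac{1}{N^{2k}} \#\bigl\{ ((m_i), (n_j)) : \textstyle\sum_{i=1}^k m_i = \sum_{i=k+1}^{2k} m_i,\ \sum_{j=1}^k n_j = \sum_{j=k+1}^{2k} n_j,\ \text{and a multiplicative constraint}\bigr\},
\]
where the multiplicative constraint requires the combined prime-power valuations of the $m$- and $n$-tuples to match. Configurations in which the $m$- and $n$-systems are independently in permutation-diagonal position account for $(1+o(1))(\E Z_k)^2$. Granting that the remaining ``cross'' configurations contribute $o((\E Z_k)^2)$, Chebyshev's inequality gives $Z_k \geq (1-\varepsilon)\E Z_k$ with probability $1-o(1)$. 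Combining with the $L^\infty$--$L^{2k}$ inequality and optimising $c$ to saturate the admissible range then delivers the stated constants $\tfrac{4}{29}$ and $\tfrac{4\sqrt{6}}{29\pi}$.

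The main obstacle is the fourth-moment calculation in the concentration step: two additive constraints and a joint multiplicative constraint couple $4k$ variables, producing a significantly richer combinatorial system than the $2k$-variable one appearing in~\cite{BNR}. Showing that non-diagonal pairings are subdominant uniformly for $k$ as large as $c\log N$ requires a delicate extension of their diagonal-dominance argument, most likely via prime-by-prime bookkeeping on the interaction between the $m$- and $n$-tuples and a reduction to counting lattice points subject to the shared additive constraints.
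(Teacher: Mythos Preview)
Your approach via high $L^{2k}$-moments is genuinely different from the paper's, which proceeds by restricting to a carefully chosen finite set $\mathcal{A}$ of $\theta$'s, conditioning on the small primes, and comparing the resulting sum of independent random variables to a multivariate Gaussian via a normal approximation result. The variances and covariances of this Gaussian are controlled using Vinogradov-type exponential sum bounds for $\sum_{n\leq X}\Lambda(n)\e(n\beta)$, and the final step is a normal comparison inequality. No integral moments $Z_k$ appear anywhere.

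Your proposal has a real gap in the concentration step. You correctly identify that bounding $\mathrm{Var}(Z_k)$ for $k\asymp\log N$ is ``the main obstacle,'' but you offer only a sketch of how one might attack it, and there are concrete reasons to doubt it goes through. First, the moment method you are extending is exactly the one used in \cite{BNR}, and it delivered only $(\log N/\log_2 N)^{1/6}$; the paper explicitly notes that off-diagonal contributions to $\E Z_k$ start to dominate once $k$ is large, so the simple ``diagonal $\approx k!$'' picture breaks down well before $k\asymp\log N$. Second, $Z_k$ for large $k$ is dominated by the behaviour of $|P_N(\theta)|$ near its extreme values, and the paper stresses that near low-denominator rationals (e.g.\ $\theta=0$) the distribution of $|P_N(\theta)|$ has tails much heavier than Gaussian. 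This heavy-tail behaviour is precisely what would inflate $\E Z_k^2$ relative to $(\E Z_k)^2$ and obstruct Chebyshev. The multiplicative constraint in $\E Z_k^2$ is a \emph{single} joint condition $m_1\cdots m_k\,n_1\cdots n_k=m_{k+1}\cdots m_{2k}\,n_{k+1}\cdots n_{2k}$, not two independent ones, so ``cross'' configurations are far more numerous than your sketch suggests.

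Finally, the claim that optimising $c$ yields the specific constants $\tfrac{4}{29}$ and $\tfrac{4\sqrt6}{29\pi}$ is not credible. Those numbers are artifacts of the paper's method: the smoothness cutoff $N^{6/7}$ gives a variance lower bound $\tfrac12\log(7/6)>\tfrac{1}{13}$, the set $\mathcal{A}$ has size $\asymp N^{1/8}$, and the constant $4/29$ is chosen precisely so that $(4\sqrt{13}/29)^2\cdot 8=1664/841<2$ in the normal comparison step. Your method, if it worked with $k=c\log N$, would produce $\sqrt{c/e}$ as the constant, and there is no reason this should equal $4/29$.
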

This lower bound is of the conjectured order of magnitude, which we explain in the proceeding section. We also prove the following upper bound result in the Steinhaus case:
\begin{theorem}\label{T:ub} Let $f$ be a Steinhaus random multiplicative function. For any $\varepsilon > 0$, we have
\[ \max_{\theta \in [0,1]} \frac{1}{\sqrt{N}} \Bigl| \sum_{\substack{n \leq N \\ P(n) \geq N^{0.8}}} f(n) \e (n \theta) \Bigr| \ll {( \log N )}^{7/4 + \varepsilon}, \]
with probability $1-o_{\varepsilon}(1)$ as $N \rightarrow \infty$, where $P(n)$ denotes the largest prime factor of $n$.
\end{theorem}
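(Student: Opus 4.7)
The plan is to decouple $S_N$ via the largest prime factor, condition on the small-prime randomness, and apply Hoeffding's inequality to the remaining sum of independent Steinhaus coefficients. Every admissible $n$ factors uniquely as $n = pm$ with $p = P(n) \in [N^{0.8}, N]$ prime and $m = n/p \leq N^{0.2}$; the constraint $P(m) < p$ is automatic since $m \leq N^{0.2} < p$, so $f(n) = f(p) f(m)$. Writing
\[ S_N(\theta) := \sum_{\substack{n \leq N \\ P(n) \geq N^{0.8}}} f(n) \e(n\theta) = \sum_{N^{0.8} \leq p \leq N} f(p) A_p(\theta), \quad A_p(\theta) := \sum_{m \leq N/p} f(m) \e(mp\theta), \]
and setting $\mathcal{F} := \sigma(f(q) : q \leq N^{0.2})$, each $A_p(\theta)$ is $\mathcal{F}$-measurable while $\{f(p)\}_{p \geq N^{0.8}}$ are independent Steinhaus variables, also independent of $\mathcal{F}$.

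Conditional on $\mathcal{F}$, Hoeffding's inequality applied to the real and imaginary parts of $\sum_p f(p) A_p(\theta)$ gives, for each fixed $\theta$,
\[ \p\Bigl( |S_N(\theta)| \geq T \sqrt{N} \,\Big|\, \mathcal{F} \Bigr) \leq 4 \exp\Bigl(-\frac{cNT^2}{V(\theta)}\Bigr), \quad V(\theta) := \sum_p |A_p(\theta)|^2. \]
By Bernstein's inequality, a grid $\mathcal{G}$ of $M = O(N)$ equispaced points controls $\sup_\theta |S_N|$ and $\sup_\theta V$ by their grid maxima, up to constants. Splitting on the event $\{\sup_\theta V \leq V_0\}$ and union-bounding over $\mathcal{G}$ yields
\[ \p\Bigl( \max_\theta \tfrac{1}{\sqrt{N}}|S_N(\theta)| > T \Bigr) \leq \p\bigl( \sup_\theta V > V_0 \bigr) + 4M \exp\bigl(-cNT^2/V_0\bigr). \]
With $T = (\log N)^{7/4+\varepsilon}$ and $V_0 = N(\log N)^{5/2+\varepsilon/2}$, the second term is $o(1)$, so it suffices to show $\sup_\theta V(\theta) \ll N(\log N)^{5/2+\varepsilon/2}$ asymptotically almost surely.

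To bound $\sup_\theta V$, observe that $\E V(\theta) = \sum_p N/p \asymp N$ (independent of $\theta$ for Steinhaus), while swapping summations yields
\[ V(\theta) - \E V(\theta) = \sum_{m_1 \neq m_2} f(m_1)\conj{f(m_2)}\, E_{m_1,m_2}(\theta), \quad E_{m_1,m_2}(\theta) := \sum_{\substack{N^{0.8} \leq p \\ p \leq N/\max(m_1, m_2)}} \e((m_1-m_2)p\theta), \]
a bilinear form in $f$ weighted by exponential sums over primes. I would compute the even moment $\E\int_0^1 |V(\theta) - \E V(\theta)|^{2K} d\theta$ for $K \sim \log N$ by counting admissible tuples, bounding $E_{m_1,m_2}(\theta)$ via Vinogradov-type estimates and handling the $f$-averages through Euler product and divisor sum moments (the "Euler Product" and "Exponential Sum" results flagged in the paper's preamble). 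Since $V$ is a trig polynomial in $\theta$ of degree $\leq 2N$, Nikol'skii's inequality transfers this $L^{2K}$ bound to $L^\infty$, supplying the desired uniform control on $V$.

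The principal obstacle is this high-moment estimate for $V$: the sums $E_{m_1,m_2}(\theta)$ are not small near rationals with small denominator, so the argument must exploit the sparsity of such bad $\theta$, either via a major/minor arcs decomposition or by extracting cancellation through the moment averages before discretization. The exponent $7/4$ in the theorem reflects the combination $\sqrt{(\log N)^{5/2} \cdot \log N} = (\log N)^{7/4}$, arising from polylog fluctuations of $V$ together with one extra logarithm absorbed by the union bound over the grid of $O(N)$ points.
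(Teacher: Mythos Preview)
Your framework matches the paper's: decompose via the largest prime, condition on $\mathcal{F}=\sigma(f(q):q\le N^{0.2})$, bound the conditional variance $V(\theta)=\sum_p|A_p(\theta)|^2$ uniformly in $\theta$, then feed this into a high-moment/Hoeffding bound together with a union bound over a discretisation of size $\asymp N$. The paper does exactly this in Section~3.2, using the conditional $2k$-th moment identity $\tilde{\E}|S_N(\theta)|^{2k}=k!\,(V(\theta)/N)^k$ with $k=\lceil\log N\rceil$, which is your Hoeffding step in disguise. Your arithmetic $7/4=\tfrac12(5/2+1)$ is also correct.

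The genuine gap is in your proposed control of $\sup_\theta V(\theta)$. You suggest estimating $\E\int_0^1|V(\theta)-\E V(\theta)|^{2K}\,d\theta$ for $K\sim\log N$ and then applying Nikol'skii. But $V(\theta)-\E V(\theta)$ is a \emph{quadratic} form in $f$, so its $2K$-th moment is a $4K$-fold sum with the multiplicative constraint $\prod_i m_1^{(i)}m_4^{(i)}=\prod_i m_2^{(i)}m_3^{(i)}$ intertwined with the additive constraint coming from the $\theta$-integral of $\prod_i E_{m_1^{(i)},m_2^{(i)}}\overline{E_{m_3^{(i)},m_4^{(i)}}}$. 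For $K\sim\log N$ this counting problem is intractable; Vinogradov-type bounds on individual $E_{m_1,m_2}(\theta)$ fail precisely on the major arcs, as you note, and there is no mechanism in your sketch to recover the loss.

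The paper's route avoids high moments of $V$ entirely. It discretises via the Farey-type set $\mathcal{D}=\{a/q+j/(4\pi N)\}$ with $q\le Q\asymp\sqrt N$, then observes that for fixed $q,j$ the map $p\mapsto pa\bmod q$ equidistributes over reduced residues as $p$ runs over suitable subintervals (Brun--Titchmarsh). This \emph{removes the dependence on $a$}, so the maximum over $\mathcal{D}$ becomes a maximum over only $\ll\sqrt N\log N$ pairs $(q,j)$ rather than $\asymp N$ triples $(a,q,j)$. With this saving, a mere \emph{second} moment of the off-diagonal part of $V$ (handled via Ramanujan-sum bounds and a Vaughan--Wooley factorisation of the constraint $m_1m_3=m_2m_4$) suffices after a union bound. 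This Brun--Titchmarsh uniformity trick is the idea your proposal is missing; without it, or a concrete substitute, the variance step does not close.
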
   
In this theorem, $o_\varepsilon(1)$ represents a function \emph{depending on $\varepsilon$} that converges to zero as $N \rightarrow \infty$. Since the set $\{ n \leq N \, : P(n) \geq N^{0.8} \}$ has positive asymptotic density, this result handles a positive proportion of the full sum. In our proof of Theorem~\ref{T:ub}, we perform a dyadic decomposition, take a union bound, and use classical bounds for exponential sums. Inefficiencies in performing these steps give a larger bound than the desired size of $\sqrt{\log N}$.
\vspace{8pt}
\begin{xremark}
\normalfont~One could approach the Rademacher case using similar techniques, but this involves technical complications that we point out in Remark~\ref{r:rademachercase}. In the interest of exposition, we do not attempt to resolve these.
\end{xremark}
Our methods for both the upper and lower bounds differ from~\cite{BNR}. In our lower bound, we work directly with a Gaussian approximation, and in the upper bound, we exploit uniformity in the conditional variances (when working over numbers with a large prime factor) to show that the maximum of these is small asymptotically almost surely.
\subsection{Heuristic discussion}\label{s:background}
As mentioned, for independent Rademacher random variables ${(X_n)}_{n \in \N}$,~\citet{SZ} showed that we almost surely have
\[ \max_{\theta \in [0,1]} \frac{1}{\sqrt{N}} \Bigl| \sum_{n \leq N} X_n \e (n \theta) \Bigr| \asymp \sqrt{\log N}. \]
If we fix $\theta$, $ \frac{1}{\sqrt{N}} \bigl| \sum_{n \leq N} X_n \e (n \theta) \bigr|$ is a sum of independent random variables that converge to a Gaussian (or the absolute value of a Gaussian, to be precise). Furthermore, the sums decorrelate when $\theta$ varies by more than $\asymp \frac{1}{N}$, so we should expect the maximum to behave like a maximum of $\asymp N$ independent Gaussians with mean $0$ and unit variance. The asymptotic almost sure order of magnitude $\sqrt{\log N}$ is in accordance with this. 
Unfortunately, this heuristic is far weaker in the random multiplicative case. For `most' $\theta$, one may expect that the increments $f(n) \e (n \theta)$ are roughly independent for each $n$, since the additive function $\e (n \theta)$ should interfere with the multiplicative structure of $f(n)$ when $\theta$ is badly approximable by a low denominator rational. In this case, $\frac{1}{\sqrt{N}} \bigl| \sum_{n \leq N} f(n) \e (n \theta) \bigr| $ should behave like a normal random variable as $f$ varies over all Rademacher or Steinhaus random multiplicative functions. Such a result has been proven (for Steinhaus $f$) in a recent preprint of \citet[Theorem~1.6]{SoXu}, with the essential requirement that $\theta$ satisfies certain Diophantine properties (and only in the `bulk' of the distribution rather than the tails of the distribution). For us, the cases where $\theta$ is close to a low denominator rational provide the main difficulty. As an extreme example, when $\theta = 0$, our sum is $\frac{1}{\sqrt{N}} \bigl| \sum_{n \leq N} f(n) \bigr|$, and we do not have convergence to a non-degenerate Gaussian (this was shown by \citet[Corollary~1]{harperlm} using probabilistic ideas from the theory of multiplicative chaos). In fact, the distribution has considerably longer tails than those of a Gaussian~\cite[Corollary~2]{harperlm}, making large values far more difficult to control. This doesn't cause any issues for the lower bound, since we can restrict our maximum to a smaller set of well-behaved $\theta$ values, but makes understanding the upper bound considerably more challenging.
\subsection{Outline of the proof of the lower bound, Theorem~\ref{T:lb}}\label{sb:lboutline}
For the lower bound, we utilise ideas of \citet{harperalmostsurelf}, which have also found use in a recent work of~\citet{chowla}. The first step is to restrict the maximum to $\theta$ in some nice set, $\mathcal{A}$, and consider only the real part of our sum. We then show that there asymptotically almost surely exists some large proportion of $\theta \in \mathcal{A}$ such that $\frac{1}{\sqrt{N}} \sum_{n \leq N, P(n) \leq N^{6/7}} f(n) \e (n \theta)$ is negligible as $N \rightarrow \infty$. That is, there asymptotically almost surely exists some large set $\tilde{\mathcal{A}} \subseteq \mathcal{A}$ such that
\[ \max_{\theta \in [0,1]} |P_N (\theta)| \geq \max_{\theta \in \tilde{\mathcal{A}}} \frac{1}{\sqrt{N}} \Re \Bigl( \sum_{N^{6/7} < p \leq N} f(p) \sum_{\substack{m \leq N/p \\ P(m) \leq N^{6/7}}} f(m) \e (mp \theta) \Bigr) , \]
up to some negligible term. One should note that $\tilde{\mathcal{A}}$ depends only on the random variables ${\bigl( f(p) \bigr)}_{p \leq N^{6/7}}$, so conditioning on ${\bigl( f(p) \bigr)}_{p \leq N^{6/7}}$ fixes the set  $\tilde{\mathcal{A}}$. Having conditioned, the term in the parenthesis is just a sum of independent random variables weighted by coefficients $\sum_{m \leq N/p} f(m) \e (mp\theta)$, and, in the Steinhaus case, for example, we can approximate the above by a maximum of a multivariate Gaussian distribution with mean $0$, variances \[ \frac{1}{2N} \sum_{N^{6/7} < p \leq N} \Bigl| \sum_{m \leq N/p} f(m) \e (mp \theta) \Bigr|^2 , \] and covariances \[ \frac{1}{2N} \Re \Bigl( \sum_{N^{6/7} < p \leq N} \sum_{m_1 \leq N/p} \sum_{m_2 \leq N/p} f(m_1) \conj{f(m_2)} \e \bigl(p(m_1 \theta_1 - m_2 \theta_2)\bigr) \Bigr). \] This step is performed using Normal Approximation Result~\ref{na} (found in~\cite{harperalmostsurelf} under the same name). The components of this multivariate Gaussian, which correspond to different $\theta$ values, will behave similarly to independent and identically distributed normal random variables, as long as the following hold:
\begin{enumerate}[label = (\roman*)]
    \item The variances are approximately equal with high probability.
    \item The coefficients $\sum_{m \leq N/p} f(m) \e (mp \theta)$ are (with high probability)  weakly correlated for different values of theta.
\end{enumerate}
To obtain a good bound, $\tilde{\mathcal{A}}$ should contain at least $N^{\beta}$ elements for some $\beta>0$, since a maximum of $N^{\beta}$ Gaussians will be $\geq C_\beta \sqrt{\log N}$. These conditions can be satisfied if we make an initial choice of $\mathcal{A}$ that allows for effective use of the following result:
\begin{es}[Davenport~\cite{davenport}, Chapter 25]\label{es}
For $\beta \in \mathbb{R}$, if there exist $a,q \in \mathbb{N}$ such that $(a,q)=1$, and $\Bigl| \beta - \frac{a}{q} \Bigr| \leq \frac{1}{q^2}$, then
\[ \sum_{n \leq N} \Lambda(n) \e (n \beta) \ll \bigl(N q^{-1/2} + N^{4/5} + N^{1/2} q^{1/2}\bigr) {(\log N)}^{4}. \]
\end{es}
Once the above conditions are established, one can apply results on the maxima of multivariate Gaussians with small correlations (specifically Normal Comparison Result~\ref{nc}, again found in~\cite{harperalmostsurelf} under the same name) to capture the Gaussian behaviour, proving the lower bound. Throughout the proof, we will use the following lemma to evaluate any moments of sums involving random multiplicative functions.
\begin{lemma}\label{L:Expectation}
Let $f$ be a Rademacher or Steinhaus random multiplicative function, for any real $k \geq 1$, $a_n \in \C$, we have \[ \E \Bigl| \sum_{n \leq N} a_n f(n) \Bigr|^{2k} \leq {\biggl( \sum_{n \leq N} \tau_{2\lceil k \rceil-1} (n) |a_n|^2 \biggr)}^k , \]
where $\tau_k$ denotes the $k$-divisor function, $\tau_k (n) = \# \{(b_1,\ldots,b_k): b_1 b_2 \ldots b_k = n, \, b_i \in \N \}$.
\end{lemma}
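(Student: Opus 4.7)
The plan is to reduce to integer $k$ via Jensen's inequality, then expand the $2K$-th moment, apply orthogonality of the random multiplicative function, and conclude via Cauchy--Schwarz together with the sub-multiplicativity of higher divisor functions. For the reduction, I would set $K := \lceil k \rceil$ and apply Jensen to the concave map $y \mapsto y^{k/K}$ on $[0, \infty)$, which gives
\[ \E \Bigl| \sum_{n \leq N} a_n f(n) \Bigr|^{2k} \leq \Bigl( \E \Bigl| \sum_{n \leq N} a_n f(n) \Bigr|^{2K} \Bigr)^{k/K}, \]
so it suffices to establish the bound with $k$ replaced by $K$ and then raise to the $k/K$ power.

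For integer $K$, I would expand the $2K$-th moment as
\[ \sum_{n_1, \ldots, n_K, m_1, \ldots, m_K \leq N} \Bigl( \prod_{i=1}^K a_{n_i} \conj{a_{m_i}} \Bigr) \, \E \Bigl[ \prod_{i=1}^K f(n_i) \conj{f(m_i)} \Bigr]. \]
Using independence of $(f(p))_p$ across primes, the expectation factor is a non-negative indicator: in the Steinhaus case it equals $\mathbf{1}[\prod_i n_i = \prod_i m_i]$, while in the Rademacher case it equals the indicator that all $n_i, m_i$ are squarefree and $\prod_{i} n_i m_i$ is a perfect square. Taking absolute values and applying pairwise AM--GM, $|a_{n_i}||a_{m_i}| \leq \tfrac{1}{2}(|a_{n_i}|^2 + |a_{m_i}|^2)$, the symmetry of the validity condition under swapping $n_i \leftrightarrow m_i$ allows me to reduce matters to controlling
\[ \sum_{\text{valid } (n_i),(m_i)} \prod_{i=1}^K |a_{n_i}|^2. \]

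For each fixed $(n_1, \ldots, n_K)$, I would then count the number of valid $(m_1, \ldots, m_K)$ and close using the sub-multiplicativity $\tau_j(n_1 \cdots n_K) \leq \prod_i \tau_j(n_i)$, which follows from the prime-by-prime inequality $\binom{a+b+j-1}{j-1} \leq \binom{a+j-1}{j-1} \binom{b+j-1}{j-1}$. In the Steinhaus case the count is exactly $\tau_K(\prod_i n_i)$, while in the Rademacher case a parity-based counting argument gives an upper bound of $\tau_{2K-1}(\prod_i n_i)$, producing the stated bound $(\sum_n \tau_{2K-1}(n) |a_n|^2)^K$. The main obstacle is the Rademacher counting step: the weaker orthogonality condition (even combined multiplicity rather than equality of products) admits many more valid tuples than the Steinhaus condition, and the resulting slack is what forces the divisor function $\tau_{2K-1}$ in place of the sharper $\tau_K$ that would suffice in the Steinhaus case alone.
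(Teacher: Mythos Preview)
The paper does not give a proof here; it simply cites \cite[Probability Result 2.3]{harperhm}. Your Jensen reduction to integer $K=\lceil k\rceil$ is correct, and the Steinhaus half of your argument is sound: after the AM--GM/Cauchy--Schwarz step one arrives at $\sum_{\prod n_i=\prod m_i}\prod_{i\le K}|a_{n_i}|^2$, and for fixed $(n_1,\ldots,n_K)$ the number of $(m_1,\ldots,m_K)$ with $\prod m_i=\prod n_i$ is at most $\tau_K(\prod n_i)\le\prod_i\tau_K(n_i)\le\prod_i\tau_{2K-1}(n_i)$, which closes the bound. (A small quibble: for Steinhaus the validity condition $\prod n_i=\prod m_i$ is \emph{not} preserved by swapping individual pairs $n_i\leftrightarrow m_i$, only by swapping the full tuples; but the global inequality $\prod_i|a_{n_i}||a_{m_i}|\le\tfrac12\bigl(\prod_i|a_{n_i}|^2+\prod_i|a_{m_i}|^2\bigr)$ together with full-tuple symmetry still yields the reduction you want.)

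The Rademacher counting step contains a genuine gap. You assert that for fixed squarefree $(n_1,\ldots,n_K)$, the number of squarefree tuples $(m_1,\ldots,m_K)$ with $m_j\le N$ and $\prod_i n_im_i$ a perfect square is bounded by $\tau_{2K-1}(\prod_i n_i)$. This is false already for $K=2$ and $n_1=n_2=1$: the condition becomes ``$m_1m_2$ is a square with $m_1,m_2$ squarefree'', which forces $m_1=m_2$, so the count is $\#\{m\le N:\mu(m)^2=1\}\asymp N$, whereas $\tau_3(1)=1$. The failure is structural: the Rademacher orthogonality relation allows the $m_i$ to share an arbitrary squarefree common factor coprime to $\prod_i n_i$, so no divisor function of $\prod_i n_i$ alone can control the count. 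Consequently the strategy of reducing to $\sum_{\text{valid}}\prod_{i\le K}|a_{n_i}|^2$ and then counting the remaining $K$ variables cannot succeed in the Rademacher case; a correct argument must organise the $2K$-tuple differently, so that the combinatorics are distributed across all $2K$ coordinates rather than frozen on half of them.
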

\begin{proof}
This is~\cite[Probability Result 2.3]{harperhm}.
\end{proof}
\vspace*{3pt}
\begin{xremark}
\normalfont~Little attempt has been made to optimise the constants in the statement of Theorem~\ref{T:lb}. We remark that alterations to the smoothness parameter $6/7$ and the set $\mathcal{A}$ can certainly lead to improvements.
\end{xremark} 
\subsection{Outline of the proof of the upper bound, Theorem~\ref{T:ub}}\label{sb:uboutline}
In contrast to the lower bound, we cannot simply restrict to a small set of ``nice'' $\theta$ values, seeing as any $\theta$ value could maximise $|P_N (\theta)|$. However, as noted in~\cite{BNR}, we can restrict to a set of discrete points by an application of Bernstein's Inequality (see exercise 7.16 of~\cite[Section~1]{HAkatznelson}), which tells us that, for $p_N (\theta) = \sum_{n \leq N} a_n \e(n \theta)$ with ${(a_n)}_{n \leq N}$ complex coefficients, we have
\[ \max_{\theta \in [0,1]} |p'_N(\theta)| \leq 2 \pi N \max_{\theta \in [0,1]} |p_N (\theta)|. \]
It follows (see for example~\cite[Lemma~4.2]{BNR}) that
\[ \max_{\theta \in [0,1]} |p_N (\theta)| \leq 2 \max_{\theta \in \mathcal{D}} |p_N (\theta)| ,\]
for any set $\mathcal{D}$ with the property that, if $\theta \in [0,1]$, then there exists $\theta' \in \mathcal{D}$ such that $\|\theta - \theta'\| \leq \frac{1}{4 \pi N}$ (where here and throughout $\| . \|$ denotes the distance to the nearest integer). This restriction will later allow us to apply the union bound. We take the discretised set of points $\mathcal{D}$ to be of the form
\[ \mathcal{D} = \biggl\{ \frac{a}{q} + \frac{j}{4 \pi N} : q \leq Q, \, 1 \leq a \leq q \, \, \mathrm{ with } \, \, (a,q)=1, \, j \in \biggl[\frac{-4 \pi N}{q Q}, \frac{4 \pi N}{q Q}\biggr] \cap \mathbb{Z} \biggr\} , \]
for some function $Q$ depending on $N$. Note that this set satisfies the $\frac{1}{4 \pi N}$-spacing property by Dirichlet's approximation theorem. Therefore, to prove Theorem~\ref{T:ub}, it suffices to prove the corresponding statement about $\frac{1}{\sqrt{N}} \bigl| \sum_{\substack{n \leq N \\ P(n) \geq N^{0.8}}} f(n) \e(n \theta) \bigr|$, where the maximum is taken over $\mathcal{D}$ instead. We note that
 \[ \sum_{\substack{n \leq N \\ P(n) \geq N^{0.8}}} f(n) \e (n \theta) = \sum_{N^{0.8} \leq p \leq N} f(p) \sum_{m \leq N/p} f(m) \e(m p \theta), \]
and the innermost sum on the right hand side is of fixed length $r$ for $N/(r+1) < p \leq N/r$. Therefore, we can write our sum as
\[ \frac{1}{\sqrt{N}} \sum_{r \leq N^{0.2}} \sum_{N/(r+1) < p \leq N/r} f(p) \sum_{m \leq r} f(m) \e (m p \theta).\]
The proof heavily relies on the fact that we can rewrite our sum in this way: we will exploit the averaging of the innermost sum as the prime $p$ varies. Conditioning on ${\bigl( f(p) \bigr)}_{p \leq N^{0.2}}$, the inner sum becomes fixed and this is just a sum of (weighted) independent random variables. Letting $\tilde{\E}$ denote the expectation conditioned on ${\bigl( f(p) \bigr)}_{p \leq N^{0.2}}$ and taking conditional moments, we find that
\[ \tilde{\E} \Bigl| \frac{1}{\sqrt{N}} \sum_{\substack{n \leq N \\ P(n) \geq N^{0.8}}} f(n) \e (n \theta) \Bigr|^{2k} \leq {\Bigl( \frac{k}{N} \sum_{r \leq N^{0.2}} \sum_{N/(r+1) < p \leq N/r} \bigl| \sum_{m \leq r} f(m) \e (m p \theta) \bigr|^2 \Bigr)}^k , \]
and by an application of Markov's inequality (see section~\ref{s:completionoft2}), it suffices to upper bound the maximum of the conditional variances
\[ \max_{\theta \in \mathcal{D}} \frac{1}{N} \sum_{r \leq N^{0.2}} \sum_{N/(r+1) < p \leq N/r} \bigl| \sum_{m \leq r} f(m) \e (m p \theta) \bigr|^2, \]
asymptotically almost surely as $N \rightarrow \infty$. To do this, we proceed by analysing the distribution of $\bigl( p \theta \bigr)$ as $p$ varies over certain subintervals contained in $\bigl( \frac{N}{r+1}, \frac{N}{r}\bigr]$. With $\theta = \frac{a}{q} + \frac{j}{4 \pi N}$, we have $p \theta = \frac{ap}{q} + \frac{pj}{4 \pi N}$: now if $p$ lies in a small interval, say $[c,d)$ (which will depend on $r,j$ and $N$) so that $\frac{pj}{4 \pi N}$ varies by less than $\asymp  1/N^{2/5}$, say, we have 
\begin{equation} \sum_{p \in [c,d)} \Bigl| \sum_{m \leq r} f(m) \e \Bigl( m \Bigl( \frac{pa}{q} + \frac{pj}{4 \pi N} \Bigr) \Bigr) \Bigr|^2 \approx \sum_{p \in [c,d)} \Bigl| \sum_{m \leq r} f(m) \e \Bigl( m \Bigl( \frac{pa}{q} + \frac{cj}{4 \pi N} \Bigr) \Bigr) \Bigr|^2. \tag{1.1} \label{equ:1.1} \end{equation}
 Therefore, we begin by dividing the intervals $\bigl( \frac{N}{r+1}, \frac{N}{r}\bigr]$ into subintervals of length $\frac{N^{3/5}}{|j|}$. Of course, in the cases when $j=0$, we need not divide these intervals at all.

We now turn our attention to analysing the behaviour of $\frac{pa}{q}$ over these new intervals. The key thing to notice is that, assuming these intervals are sufficiently long, $\frac{pa}{q}$ distributes uniformly over reduced fractions with denominator $q$, regardless of the value of $a$, seeing as the Brun--Titchmarsh theorem tells us that for any $x > 2$, and any $q \leq y$, we have
\[ \pi(x+y; q, a) - \pi(x;q,a) \leq \frac{2y}{\varphi(q) \log (y/q)}. \]
Under restricted conditions, we can therefore upper bound~\eqref{equ:1.1} by
\[ \ll \frac{(d-c)}{\varphi(q) \log \bigl( (d-c)/q \bigr)} \sum_{(b,q)=1} \Bigl| \sum_{m \leq r} f(m) \e \Bigl( m \Bigl( \frac{b}{q} + \frac{cj}{4 \pi N} \Bigr) \Bigr) \Bigr|^2, \]
which no longer depends on the parameter $a$. This uniformity gives a large saving since the maximum over $\mathcal{D}$ can be replaced by a maximum over $\theta$ with $q \leq Q$ and $|j| \leq \frac{4 \pi N}{qQ}$. When applying a union bound, this decreases the number of terms considered by a factor of $Q$ (up to logarithms). To estimate the resulting probabilities, we expand out the square, remove the diagonal terms, and apply Markov's inequality with second moments, utilising the simple but important fact that
\[ \E \Biggl[ \sum_{\substack{m_1, m_2 \leq r, m_1 \neq m_2 \\ m_3, m_4 \leq r, m_3 \neq m_4}} f(m_1) \conj{f(m_2)} \, \conj{f(m_3)} f(m_4) \Biggr] = \sum_{\substack{m_1, m_2 \leq r, m_1 \neq m_2 \\ m_3, m_4 \leq r, m_3 \neq m_4 \\ m_1 m_4 = m_2 m_3}} 1, \]
which follows from the orthogonality of Steinhaus $f(n)$. The proof is completed by showing that the resulting exponential sums are small, which can be performed using standard techniques. In particular, we make use of the following result:
\begin{es}\label{es2}
For any $M \in \N$ and nonzero $\theta \in \R$, 
\[ \sum_{M' \leq n \leq 2M} \e \biggl( \frac{\theta}{n+1} \biggr) \ll  \frac{|\theta|^{1/2}}{M^{1/2}} + \frac{M^{3/2}}{|\theta|^{1/2}} , \]
holds uniformly for $M' \in [M,2M]$.
\end{es}
\begin{proof}
This follows from Theorem 2.2 of~\cite{vdcmethod}.
\end{proof}
We remark that this bound is only nontrivial when $M \leq |\theta| \leq M^3$. This will be of little consequence, since the critical case we need to handle will be when $M$ is large (roughly of size $N$) and $|\theta|$ lies in this range, so considering the trivial bound gives little improvement overall. \\
\begin{remark}~\label{r:interplay}
\normalfont~The limits of this method arise from the fact that the Brun--Titchmarsh bound only holds for $q$ smaller than the length of the interval that the primes run over. In our proof, the splitting of our prime sum means that we will require $\frac{N^{3/5}}{|j|} \geq q$, for all nonzero $j$, and $ \frac{N}{r(r+1)} \geq q $, for the case $j=0$. Seeing as $q |j| \leq \frac{4 \pi N}{Q}$, and $r \leq N^{0.2}$, we deduce that both conditions are satisfied if $4 \pi N^{2/5} \leq Q \leq \frac{N^{3/5}}{2}$, say. In our proof, we take $Q = 4 \pi \sqrt{N}$. Technical changes can be made to slightly reduce the smoothness condition $P(n) \geq N^{0.8}$ in our sum (say, to $P(n) \geq N^{5/7}$). We suggest how such changes could be made in Remark~\ref{r:optimising}. However, our techniques could not be pushed to handle $n$ with $P(n) < N^{0.5}$, seeing as, in such a case, the sum over primes satisfying $\frac{N}{r+1} < p \leq \frac{N}{r}$ can be empty. 
\end{remark}
\section{Lower Bound}\label{S:LB}
\subsection{Approximating by a normal}
As described, the first step in the proof is to split up our sum into two pieces. One piece will be small asymptotically almost surely for a large proportion of $\theta$, and the other will take an approximately Gaussian shape. To illuminate the Gaussian behaviour, we employ the following results of \citet{harperalmostsurelf}, referred to there under the same names:
\begin{na}\label{na}
Suppose that $m \geq 1$, and that $\mathcal{H}$ is a finite non-empty set. Suppose that for each $1 \leq i \leq m$ and $h \in \mathcal{H}$ we are given a deterministic co-efficient $c(i,h) \in \C$. Finally, suppose that ${(V_i)}_{1 \leq i \leq m}$ is a sequence of independent, mean zero, complex random variables, and let $Y={(Y_h)}_{h \in \mathcal{H}}$ be the $\#\mathcal{H}$-dimensional random vector with components $Y_h \coloneqq\Re (\sum_{i=1}^m c(i,h) V_i)$.
If $Z = {(Z_h)}_{h \in \mathcal{H}}$ is a multivariate normal random vector with the same mean vector and covariance matrix as $Y$, then for any $u \in \R$ and any small $\eta > 0$ we have
\begin{multline*} 
\p(\max_{h \in \mathcal{H}} Y_h \leq u ) \leq \p (\max_{h \in \mathcal{H}} Z_h \leq u + \eta ) +  \\
O \Biggl(\frac{1}{\eta^3} \sum_{i=1}^m \E |V_i|^3 {(\sum_{h \in \mathcal{H}} |c(i,h)|)}^3 + \frac{1}{\eta^2} \sum_{g,h \in \mathcal{H}} \sqrt{\sum_{i=1}^m |c(i,g)|^2 |c(i,h)|^2 \E |V_i|^4 }\Biggr).
\end{multline*}
\end{na}
\begin{nc}\label{nc}
Suppose that $n \geq 2$ and that $\varepsilon>0$ is sufficiently small (i.e.\ less than a certain absolute constant). Let $X_1, \ldots, X_n$ be mean zero, variance one, jointly normal random variables, and suppose $\E X_i X_j \leq \varepsilon$ for $i \neq j$.
Then for any $100 \varepsilon \leq \delta \leq 1/100$, we have
\[ \p \biggl( \max_{1 \leq i \leq n} X_i \leq \sqrt{(2 - \delta) \log n} \biggr) \ll e^{- \Theta(n^{\delta/20}/\sqrt{\log n})} + n^{-\delta^2 / 50 \varepsilon} . \]
\end{nc}
\begin{proof}[Proof of Theorem~\ref{T:lb}.]
Let $f$ be either a Steinhaus or a Rademacher random multiplicative function. As mentioned, for the lower bound we can restrict our maximum to any set $\mathcal{A} \subseteq [0,1]$. We choose this set to allow for effective use of Exponential Sum Result~\ref{es} later in the argument. Given any $N$, by Bertrand's postulate we can find some prime $q$ such that $N^{1/2} \leq q \leq 2 N^{1/2}$. We let \begin{align} \mathcal{A} = \Bigl\{ \frac{p_{r+i}}{q} : \, p_r < N^{1/7} \text{ maximal} , \,  1 \leq i \leq N^{1/8} \Bigr\}, \label{equ:2.1} \tag{2.1} \end{align}
so that $p_{r+1}, p_{r+2}, \ldots$ are the first $N^{1/8}$ primes larger than $N^{1/7}$. The first step in our proof is to decompose the sum as
\[ \max_{\theta \in \mathcal{A}} |P_N (\theta)| = \max_{\theta \in \mathcal{A}} \Bigl| \frac{1}{\sqrt{N}} \sum_{N^{6/7} < p \leq N} f(p) \sum_{m \leq N/p} f(m) \e (mp \theta) + \frac{1}{\sqrt{N}} \sum_{\substack{n \leq N \\ P(n) \leq N^{6/7}}} f(n) \e (n \theta) \Bigr|. \]
We then proceed similarly to \citet{harperalmostsurelf}, showing that, with high probability, the second term is small for \emph{most} $\theta \in \mathcal{A}$. That is, 
\begin{align*}
&\p \Bigl( \exists \text{ random } \tilde{\mathcal{A}} \subseteq \mathcal{A} \text{ with } |\tilde{\mathcal{A}}| \geq 0.99 |\mathcal{A}|  : \frac{1}{\sqrt{N}} \Bigl| \sum_{\substack{n \leq N \\ P(n) \leq N^{6/7}}} f(n) \e (n \theta) \Bigr| < {(\log N)}^{1/10} \, \, \forall \theta \in \tilde{\mathcal{A}} \Bigr) \\
& \geq 1 - \frac{100}{|\mathcal{A}|} \E \# \Bigl\{ \theta \in \mathcal{A} \, : \, \frac{1}{\sqrt{N}} \Bigl| \sum_{\substack{n \leq N \\ P(n) \leq N^{6/7}}} f(n) \e (n \theta) \Bigr| \geq {(\log N)}^{1/10} \Bigr\} \\ 
& \geq 1 - \frac{100}{N |\mathcal{A}| {(\log N)}^{1/5}} \sum_{\theta \in \mathcal{A}} \E \Bigl| \sum_{\substack{n \leq N \\ P(n) \leq N^{6/7}}} f(n) \e (n \theta) \Bigr|^2 \geq 1 - \frac{100}{{(\log N)}^{1/5}}.
\end{align*}
For the remainder of the proof, we may restrict ourselves to the above event (which is determined entirely by the random variables ${\bigl(f(p)\bigr)}_{p \leq N^{6/7}}$) since it occurs with probability $1-o(1)$ as $N \rightarrow \infty$. We introduce the notation $\tilde{\p}$ and $\tilde{\E}$ to denote the probability and expectation conditioned on ${\bigl(f(p)\bigr)}_{p \leq N^{6/7}}$, and note that, when restricted to the above event, conditioning on ${\bigl(f(p)\bigr)}_{p \leq N^{6/7}}$ fixes the set $\tilde{\mathcal{A}}$. To prove the lower bound in the Steinhaus case, it suffices to show that for some small $\varepsilon > 0$ (which we assume to be sufficiently small throughout),
\[ \tilde{\p} \biggl( \max_{\theta \in \mathcal{\tilde{A}}} \frac{1}{\sqrt{N}} \Re \Bigl( \sum_{N^{6/7} < p \leq N} f(p) \sum_{m \leq N/p} f(m) \e (mp \theta) \Bigr) \leq (4/29 + \varepsilon) \sqrt{\log N} \biggr), \] 
is $o(1)$ with probability $1-o(1)$ over all realisations of ${\bigl(f(p)\bigr)}_{p \leq N^{6/7}}$. For the Rademacher case we replace the constant $\frac{4}{29}$ by $\frac{4 \sqrt{6}}{29 \pi}$.
Since the conditioning fixes the innermost sum, this is a maximum of a sum of independent random variables with coefficients $\sum_{m \leq N/p} f(m) \e (mp \theta)$. We approximate this sum by a multivariate normal random vector using Normal Approximation Result~\ref{na}, giving an upper bound for the above probability of
\begin{align*}
& \p \bigr( \max_{\theta \in \tilde{\mathcal{A}}} Z_\theta \leq (4/29 + \varepsilon) \sqrt{\log N} + \eta \bigl) + O \Biggl( \frac{1}{N^{3/2} \eta^3} \sum_{N^{6/7} < p \leq N} {\Bigl( \sum_{\theta \in \tilde{\mathcal{A}}} \Bigl| \sum_{m \leq N/p} f(m) \e (mp\theta) \Bigr| \Bigr)}^3 \Biggr)  \\ & + O \Biggl( \frac{1}{N \eta^2} \sum_{\theta_1, \theta_2 \in \tilde{\mathcal{A}}} \sqrt{\sum_{N^{6/7} < p \leq N}\Bigl| \sum_{m \leq N/p} f(m) \e (mp\theta) \Bigr|^2 \Bigl| \sum_{m \leq N/p} f(m) \e (m p \theta_2) \Bigr|^2} \Biggr), \label{equ:2.2} \tag{2.2}
\end{align*}
for any small $\eta > 0$. For $f$ Steinhaus, $Z_\theta$ are jointly normal random variables with means \\ $\E Z_\theta \coloneqq \tilde{\E} \frac{1}{\sqrt{N}} \Re \sum_{N^{6/7} < p \leq N} f(p) \sum_{m \leq N/p} f(m) \e (mp \theta) = 0  $, variances
\begin{align*}
\E Z_\theta^2 & \coloneqq \tilde{\E} {\Bigl( \frac{1}{\sqrt{N}} \Re \sum_{N^{6/7} < p \leq N} f(p) \sum_{m \leq N/p} f(m) \e (mp \theta) \Bigr)}^2 = \frac{1}{2N} \sum_{N^{6/7} < p \leq N} \Bigl| \sum_{m \leq N/p} f(m) \e (mp \theta) \Bigr|^2,
\end{align*}
and covariances $\E Z_{\theta_1} Z_{\theta_2} \coloneqq $
\begin{align*}
&\tilde{\E} \Bigl( \frac{1}{\sqrt{N}} \Re \sum_{N^{6/7} < p \leq N} f(p) \sum_{m \leq N/p} f(m) \e (mp \theta_1) \Bigr) \Bigl( \frac{1}{\sqrt{N}} \Re \sum_{N^{6/7} < p \leq N} f(p) \sum_{m \leq N/p} f(m) \e (mp \theta_2) \Bigr) \\
 =& \, \frac{1}{2N} \Re \Bigl( \sum_{N^{6/7} < p \leq N} \sum_{m_1 \leq N/p} \sum_{m_2 \leq N/p} f(m_1) \conj{f(m_2)} \e \bigl(p(m_1 \theta_1 - m_2 \theta_2)\bigr) \Bigr),
\end{align*}
for $\theta_1, \theta_2 \in \tilde{\mathcal{A}}$, $\theta_1 \neq \theta_2$. In the case of $f$ Rademacher, we again have zero mean. However, since $f$ takes only real values, we have variances
\[ \E Z_\theta^2 = \frac{1}{N} \sum_{N^{6/7} < p \leq N} {\Bigl( \Re \sum_{m \leq N/p} f(m) e(m p \theta) \Bigr)}^2 ,\]
and covariances
\[ \E Z_{\theta_1} Z_{\theta_2} = \frac{1}{N} \sum_{N^{6/7} < p \leq N} \Bigl( \Re \sum_{m_1 \leq N/p} f(m_1) \e (m_1 p \theta_1) \Bigr) \Bigl( \Re \sum_{m_2 \leq N/p} f(m_2) \e (m_2 p \theta_2) \Bigr) ,\]
instead. We proceed by showing that, in each case, all terms in~\eqref{equ:2.2} are $o(1)$ asymptotically almost surely.
\subsection{The `big Oh' terms are small}
Here we show that the `big Oh' terms in~\eqref{equ:2.2} are small asymptotically almost surely. To do this we will apply Lemma~\ref{L:Expectation}, which allows us to handle both the Rademacher and Steinhaus cases. By Markov's inequality for first moments, followed by Hölder's inequality and Lemma~\ref{L:Expectation}, the probability that the first `big Oh' term in~\eqref{equ:2.2} is larger than $1/\log N$ is
\begin{align*}
& \ll \frac{\log N}{\eta^3 N^{3/2}} \sum_{N^{6/7} < p \leq N} \E {\Bigl( \sum_{\theta \in \tilde{\mathcal{A}}} \Bigl| \sum_{m \leq N/p} f(m) \e (mp \theta) \Bigr| \Bigr)}^3 \\
& \leq \frac{(\log N)|\mathcal{A}|^2}{\eta^3 N^{3/2}} \sum_{N^{6/7} < p \leq N} \sum_{\theta \in \mathcal{A}} \E \Bigl| \sum_{m \leq N/p} f(m) \e (mp \theta) \Bigr|^3 \\
& \ll \frac{\log N |\mathcal{A}|^2}{\eta^3 N^{3/2}} \sum_{N^{6/7} < p \leq N} \sum_{\theta \in \mathcal{A}} {\biggl(\frac{N}{p} {(\log N)}^2 \biggr)}^{3/2} \ll \frac{{(\log N)}^3}{\eta^3 N^{3/56}} .
\end{align*}
Here we have used the inequalities $|\mathcal{A}| \ll N^{1/8}$ and $ \sum_{n \leq N} \tau_k (n) \ll_k N {(\log N)}^{k-1}$ (see for example~\cite[Lemma~3.1]{BNR}). Similarly, for the second `big Oh' term in~\eqref{equ:2.3}, we apply Cauchy--Schwarz, followed by Lemma~\ref{L:Expectation}, to find that the probability of being larger than $1/\log N$ is
\begin{align*}
&\ll \frac{\log N}{N \eta^2} \sum_{\theta_1, \theta_2 \in \mathcal{A}} \sqrt{\sum_{N^{6/7} < p \leq N} \sqrt{\E \Bigl| \sum_{m \leq N/p} f(m) \e (mp \theta_1 ) \Bigr|^4 \E \Bigl| \sum_{m \leq N/p} f(m) \e (mp \theta_2 ) \Bigr|^4 }} \\
&\leq \frac{\log N}{N \eta^2} \sum_{\theta_1, \theta_2 \in \mathcal{A}} \sqrt{\sum_{N^{6/7} < p \leq N} {\Bigl( \sum_{m \leq N/p} \tau_3 (m) \Bigr)}^2} \ll \frac{{(\log N)}^3 |\mathcal{A}|^2}{\eta^2} \sqrt{\sum_{N^{6/7} < p \leq N} \frac{1}{p^2}} \ll \frac{{(\log N)}^{5/2}}{\eta^2 N^{5/28}}.
\end{align*}
We conclude that our ``big Oh'' terms are $\ll 1/\log N$ on a set of probability $1 - O \Bigl( \frac{{(\log N)}^{5/2}}{\eta^2 N^{5/28}} \Bigr) - O \Bigl(\frac{{(\log N)}^3}{\eta^3 N^{3/56}} \Bigr)$ over realisations of ${\bigl(f(p)\bigr)}_{p \leq N^{6/7}}$. Further restricting ourselves to these events (that depend only on ${(f(p))}_{p \leq N^{6/7}}$), it suffices to bound $\p \bigr( \max_{\theta \in \tilde{\mathcal{A}}} Z_\theta \leq (4/29 + \varepsilon) \sqrt{\log N} + \eta \bigl)$ in the Steinhaus case, and $\p \bigr( \max_{\theta \in \tilde{\mathcal{A}}} Z_\theta \leq (4\sqrt{6}/29\pi + \varepsilon) \sqrt{\log N} + \eta \bigl)$ in the Rademacher case.
\subsection{Variance and covariance estimates}
In this section we examine the size of the variances $\E Z_\theta^2$ and show that the covariances $\E Z_{\theta_1} Z_{\theta_2}$ are small, thus allowing the use of Normal Comparison Result~\ref{nc} to estimate the size of the first term in~\eqref{equ:2.2}. Recall that in the Steinhaus case, our variances are \[\E Z_\theta^2 = \frac{1}{2N} \sum_{N^{6/7} < p \leq N} \Bigl| \sum_{m \leq N/p} f(m) \e (mp \theta) \Bigr|^2 ,\]
for $\theta \in \tilde{\mathcal{A}}$. We first expand out the square. Separating out the diagonal term and changing the order of summations in the off-diagonal term yields
\begin{align*}
&\frac{1}{2N} \sum_{N^{6/7} < p \leq N} \Bigl( \frac{N}{p} + O(1) \Bigr) \\
+&\frac{1}{2N} \sum_{m_1 < N^{1/7}} f(m_1) \sum_{\substack{m_2 < N^{1/7} \\ m_1 \neq m_2}} \conj{f(m_2)} \sum_{N^{6/7} < p \leq \min \{ N/m_1, N/m_2\}} \e (p \theta(m_1 - m_2)) . 
\end{align*}
It follows from Mertens' second theorem that the first term is $\frac{\log (7/6)}{2} + o(1)$. For the second term, note that the innermost sum is
\begin{align*}
\Biggl( \sum_{N^{6/7} < n \leq \min\{ N/m_1, N/m_2\}} \frac{\Lambda(n)}{\log n} \e (n \theta(m_1 - m_2)) \Biggr) + O \Bigl( \sqrt{\min \{N/m_1, N/m_2\}} \Bigr),
\end{align*}
so applying the triangle inequality we find that the second term is
\begin{align*}
\ll & \frac{1}{N} \sum_{m_1 < N^{1/7}} \sum_{m_2 < m_1} \Bigl| \sum_{N^{6/7} < n \leq N/m_1} \frac{\Lambda(n)}{\log n} \e (n \theta (m_1 - m_2)) \Bigr| + O \Bigl( \frac{1}{N^{2/7}} \Bigr) \\
\ll & \frac{1}{N \log N} \sum_{m_1 < N^{1/7}} \sum_{m_2 < m_1} \max_{N^{6/7} < X \leq N/m_1} \Bigl| \sum_{N^{6/7} < n \leq X} \Lambda(n) \e (n \theta (m_1 - m_2)) \Bigr| + O \Bigl( \frac{1}{N^{2/7}} \Bigr),
\end{align*}
where the second line follows from Abel summation.
We bound this main term using Exponential Sum Result~\ref{es} with $q \asymp N^{1/2}$, seeing as $\theta$ will be of the from $p / q$ according to~\eqref{equ:2.1}, and certainly $ (p (m_1 - m_2), q) = 1$ for any permitted choice of $p, m_1, m_2$. This gives the bound
\begin{align*}
& \ll \frac{{(\log N)}^3}{N} \sum_{m_1 < N^{1/7}} \sum_{m_2 < m_1} \biggl( \biggl( \frac{N}{m_1} \biggr) N^{-1/4} + {\biggl( \frac{N}{m_1} \biggr)}^{3/4} + {\biggl( \frac{N}{m_1} \biggr)}^{1/2} N^{1/4} \biggr) \ll \frac{{(\log N)}^3}{N^{1/14}}.
\end{align*}
Since $\log (7/6)$ is only slightly larger than $2/13$, we have the variance bound
\begin{equation}
\E Z_\theta^2 \geq \frac{1}{13} + o(1) \label{equ:2.3} \tag{2.3},
\end{equation}
uniformly for $\theta \in \tilde{\mathcal{A}}$, which will be sufficient for our purposes. In the Rademacher case, we proceed similarly with some minor alterations. First of all, the contribution from diagonal terms is $\frac{1}{2N} \sum_{N^{6/7} < p \leq N} \frac{6}{\pi^2} \frac{N}{p} + O \bigl( \sqrt{N/p} \bigr)$, since the Rademacher random multiplicative function is supported only on squarefree integers. This gives a contribution of $\frac{6}{13 \pi^2} + O \bigl( \frac{1}{\log N} \bigr)$ from the diagonal terms. The remaining terms are \begin{align*} & \frac{1}{4N} \sum_{N^{6/7} < p \leq N} \sum_{m_1 \leq N/p} \sum_{\substack{m_2 \leq N/p \\ m_2 \neq m_1}} f(m_1) f(m_2) \Bigl( \e \bigl((m_1 - m_2) p \theta \bigr) + \e \bigl((m_2 - m_1) p \theta \bigr) \Bigr) \\ 
+ & \frac{1}{4N} \sum_{N^{6/7} < p \leq N} \sum_{m_1 \leq N/p} \sum_{m_2 \leq N/p} f(m_1) f(m_2) \Bigl( \e \bigl((m_1 + m_2) p \theta \bigr) + \bigl((- m_1 - m_2) p \theta \bigr)\Bigr). \end{align*} 
All these terms can be shown to be negligible (i.e. $o(1)$) using the same arguments as in the Steinhaus case, giving $\E Z_\theta^2 \geq \frac{6}{13 \pi^2} + o(1)$ in the Rademacher case. \\ We now move on to handling covariances in the Steinhaus case. Recall that these are
\[ \E Z_{\theta_1} Z_{\theta_2} = \frac{1}{2N} \Re \Bigl( \sum_{N^{6/7} < p \leq N} \sum_{m_1 \leq N/p} \sum_{m_2 \leq N/p} f(m_1) \conj{f(m_2)} \e \bigl(p(m_1 \theta_1 - m_2 \theta_2)\bigr) \Bigr),\]
for $\theta_1, \theta_2 \in \tilde{\mathcal{A}}$, $\theta_1 \neq \theta_2$. Swapping the order of summation and applying the triangle inequality yields an upper bound
\begin{align*}
& \, \frac{1}{2N} \sum_{m_1 < N^{1/7}} \sum_{m_2 < N^{1/7}} \Bigl| \sum_{N^{6/7} < p \leq \min\{N/m_1, N/m_2\}} \e \bigl( p(m_1 \theta_1 - m_2 \theta_2) \bigr) \Bigr| \\
= & \, \frac{1}{2N} \sum_{m_1 < N^{1/7}} \sum_{m_2 < N^{1/7}} \Bigl| \sum_{N^{6/7} < n \leq \min\{N/m_1, N/m_2\}} \frac{\Lambda(n)}{\log n} \e \bigl( n(m_1 \theta_1 - m_2 \theta_2) \bigr) \Bigr| + O \Bigl( \frac{1}{N^{2/7}} \Bigr), \end{align*}
and by Abel summation, this main term is
\begin{align*}
\ll & \, \frac{1}{N \log N} \sum_{m_1 < N^{1/7}} \sum_{m_2 < N^{1/7}} \max_{N^{6/7} < X \leq \min\{N/m_1, N/m_2\}}\Bigl| \sum_{N^{6/7} < n \leq X} \Lambda(n) \e \bigl( n(m_1 \theta_1 - m_2 \theta_2) \bigr) \Bigr|,
\end{align*}
Applying Exponential Sum Result~\ref{es} similarly to above, we find that this is
\begin{align*}
& \ll \frac{{(\log N)}^3}{N} \sum_{m_1 < N^{1/7}} \sum_{m_2 < m_1} \biggl( \biggl( \frac{N}{m_1} \biggr) N^{-1/4} + {\biggl( \frac{N}{m_1} \biggr)}^{3/4} + {\biggl( \frac{N}{m_1} \biggr)}^{1/2} N^{1/4} \biggr) \ll  \frac{{(\log N)}^3}{N^{1/14}},
\end{align*}
uniformly in $\theta_1, \theta_2$, and so our covariances are all $o(1)$ in the Steinhaus case. The covariances in the Rademacher case can be written as $\E Z_{\theta_1} Z_{\theta_2} = $
\[ \frac{1}{2N} \Re \biggl( \sum_{N^{6/7} < p \leq N}  \sum_{m_1 \leq N/p} \sum_{m_2 \leq N/p} f(m_1) f(m_2) \Bigl( \e \bigl( p (m_1 \theta_1 + m_2 \theta_2) \bigr) + \e \bigl( p (m_1 \theta_1 - m_2 \theta_2) \bigr) \Bigr) \biggr). \]
Applying the same argument as in the Steinhaus case, we find that the covariances in the Rademacher case are also $o(1)$, uniformly in $\theta_1, \theta_2$.
\subsection{Combining results}
From~\eqref{equ:2.2} and the preceeding estimates we conclude that, in the Steinhaus case, for any fixed small $\varepsilon > 0$,
\begin{align*} \p \Bigl( \max_{\theta \in [0,1]} |P_N (\theta) | \leq 4/29 \sqrt{\log N} \Bigr) \leq& \p \Bigl( \max_{\theta \in \tilde{\mathcal{A}}} Z_\theta \leq (4/29 + \varepsilon)\sqrt{\log N} + \eta \Bigr) + O\biggl( \frac{1}{{(\log N)}^{1/5}} \biggr)  \\ & + O \biggl( \frac{{(\log N)}^{5/2}}{\eta^2 N^{5/28}} \biggr) + O \biggl( \frac{{(\log N)}^3}{\eta^3 N^{3/56}} \biggr), \end{align*}
where $\tilde{\mathcal{A}}$ is a set of size $\geq 0.99 N^{1/8}$ and $Z_\theta$ is a multivariate Gaussian with variances $\E Z_\theta^2 \geq \frac{1}{13} + o(1)$ for any $\theta \in \tilde{\mathcal{A}}$ by~\eqref{equ:2.3} and covariances $\E Z_{\theta_1} Z_{\theta_2} = o(1)$ for any $\theta_1 \neq \theta_2 \in \tilde{\mathcal{A}}$. Taking $\eta$ constant and $\varepsilon>0$ sufficiently small, in the Steinhaus case we have
\begin{align*} \p \bigr( \max_{\theta \in \tilde{\mathcal{A}}} Z_\theta &\leq (4/29 + \varepsilon) \sqrt{\log N} + \eta \bigl) \leq \p \Bigl( \max_{\theta \in \tilde{\mathcal{A}}} \frac{Z_\theta}{\sqrt{\E Z_{\theta}^2}} \leq \Bigl( \frac{4\sqrt{13}}{29} + 4\varepsilon \Bigr) \sqrt{\log N} \Bigr),
\end{align*}
for $N$ sufficiently large. Since $|\tilde{\mathcal{A}}| \geq 0.99N^{1/8}$, this is
\begin{align*}
& \leq \p \Bigl( \max_{\theta \in \tilde{\mathcal{A}}} \frac{Z_\theta}{\sqrt{\E Z_{\theta}^2}} \leq \Bigl( \frac{4\sqrt{13}}{29} + 4\varepsilon \Bigr) \sqrt{(8 + \varepsilon)\log |\tilde{A}|} \Bigr) \\ &\leq \p \biggl( \max_{\theta \in \tilde{\mathcal{A}}} \frac{Z_\theta}{\sqrt{\E Z_{\theta}^2}} \leq \sqrt{(1664/841 + 50 \varepsilon)\log |\tilde{A}|} \biggr) ,
\end{align*}
when $\varepsilon > 0$ is sufficiently small. Seeing as our covariances are each $o(1)$, Theorem~\ref{T:lb} is concluded by applying Normal Comparison Result~\ref{nc} to deduce that this probability is $o(1)$. In the Rademacher case we replace $4/29$ by the smaller constant $4 \sqrt{6}/29 \pi$, since we have variances of size atleast $6/13 \pi^2 + o(1)$. We remark that the different bounds are a consequence of the support of our random multiplicative functions.
\end{proof} 
\section{Upper Bound}
\subsection{Conditional variance bound}\label{s:cvb}
The proof of the upper bound revolves around proving the following proposition on the conditional variance of our sum. We recall from section~\ref{sb:uboutline} that it is sufficient to take the maximum over some sufficiently dense set $\mathcal{D}$.
\begin{proposition}\label{p:mainprop}
Let $f$ be a Steinhaus random multiplicative function and \[ \mathcal{D} = \biggl\{ \frac{a}{q} + \frac{j}{4 \pi N} : q \leq 4 \pi \sqrt{N}, \, 1 \leq a \leq q \, \, \mathrm{ with } \, \, (a,q)=1, \, j \in \biggl[\frac{-\sqrt{N}}{q}, \frac{\sqrt{N}}{q} \biggr] \cap \mathbb{Z} \biggr\}. \]
For any $\varepsilon > 0$, with probability $1 - o_{\varepsilon} (1)$, we have
\[ \max_{\theta \in \mathcal{D}} \frac{1}{N} \sum_{N^{0.8} \leq p \leq N} \Bigl| \sum_{m \leq N/p} f(m) \e (mp \theta) \Bigr|^2 \ll {(\log N)}^{5/2 + \varepsilon} .\]
\end{proposition}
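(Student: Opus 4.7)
The plan is to follow the outline in Section~\ref{sb:uboutline} systematically and proceed in four stages. First, I would rewrite the inner sum by grouping primes into $N/(r+1) < p \leq N/r$ for $r \leq N^{0.2}$, so that the $m$-sum has fixed length $r$. Writing $\theta = a/q + j/(4 \pi N) \in \mathcal{D}$, I partition each prime interval $(N/(r+1), N/r]$ into subintervals $[c,d)$ of length $\asymp N^{3/5}/|j|$ (one interval if $j = 0$). On each subinterval $pj/(4 \pi N)$ varies by $O(1/N^{2/5})$, so replacing it by the pinned value $cj/(4\pi N)$ introduces only a multiplicative $(1 + O(r/N^{2/5}))$-error on $\e(\cdot)$, which is negligible given $r \leq N^{0.2}$.

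Second, I apply Brun--Titchmarsh to count primes $p \in [c, d)$ with $p \equiv b \pmod{q}$. The hypothesis $d - c \geq q^{1+o(1)}$ holds uniformly: for $j \neq 0$ we have $N^{3/5}/|j| \geq N^{1/10} q$ using $|j|q \leq \sqrt{N}$, while for $j = 0$ we have $N/(r(r+1)) \geq N^{0.6} \geq q$ since $q \leq 4 \pi \sqrt{N}$ and $r \leq N^{0.2}$. Noting $(a,q) = 1$ so that $b \equiv pa \pmod{q}$ distributes uniformly over $(\mathbb{Z}/q\mathbb{Z})^\times$ as $p$ varies, this bounds the inner sum over $p \in [c,d)$ by
\[ \ll \frac{d-c}{\varphi(q) \log((d-c)/q)} \sum_{(b,q)=1} \Bigl| \sum_{m \leq r} f(m) \e\Bigl(m\Bigl(\tfrac{b}{q} + \tfrac{cj}{4 \pi N}\Bigr)\Bigr) \Bigr|^2, \]
which crucially no longer depends on $a$. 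The maximum over $\mathcal{D}$ thus reduces to a maximum over tuples $(q, j, r, s)$ (with $s$ indexing subintervals), saving a factor $\asymp q$ over a naive union bound.

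Third, I separate the diagonal $m_1 = m_2$, which is deterministic and contributes a main term of size $\asymp \log N$ after summing over all primes, from the off-diagonal random piece $U_{q,j,r,s}$, to which I apply Markov's inequality in its second-moment form. Expanding $\E|U_{q,j,r,s}|^2$ as a fourfold sum, Steinhaus orthogonality kills everything except tuples satisfying $m_1 m_4 = m_2 m_3$ with $m_1 \neq m_2$, $m_3 \neq m_4$, producing a count controlled by divisor-type bounds. After summing over $b_1, b_2 \in (\mathbb{Z}/q\mathbb{Z})^\times$ (a complete geometric sum over reduced residues), the remaining oscillatory sum in the subinterval-endpoint variable $c$, recast via Abel summation, becomes an exponential sum of the shape treated by Exponential Sum Result~\ref{es2}, placing us in the regime $M \leq |\theta| \leq M^3$ where the bound is nontrivial.

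Finally, a union bound with target $T = (\log N)^{5/2 + \varepsilon}$ requires $\sum_{q, j, r, s} \E|U_{q,j,r,s}|^2 / T^2 = o_\varepsilon(1)$, where the tuple count is of size $N^{1+o(1)}$ after the Brun--Titchmarsh saving. The main obstacle will be Stage three: threading the estimate so that the product of the tuple count, the divisor-function penalty (appearing as powers of $\log N$ from $\sum \tau_k$), and the Exponential Sum Result~\ref{es2} bound remains $o_\varepsilon(T^2)$. This is precisely where the specific choice $Q = 4\pi\sqrt{N}$ and the constraint $P(n) \geq N^{0.8}$ become unavoidable, as foreshadowed in Remark~\ref{r:interplay}: one needs \emph{both} $N^{3/5}/|j| \geq q$ and $N/(r(r+1)) \geq q$ to hold simultaneously throughout the ranges of summation.
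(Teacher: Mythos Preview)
Your outline captures the main steps correctly (subinterval decomposition, Brun--Titchmarsh to remove the $a$-dependence, diagonal extraction, second moments via Steinhaus orthogonality), but the union-bound structure has a real gap. The quantity in the proposition is $\max_{q,j}$ of a \emph{sum} over $r$, $b$, and the subinterval index $k$. Showing $\sum_{q,j,r,s} \E|U_{q,j,r,s}|^2/T^2 = o(1)$ would only establish that each individual piece satisfies $|U_{q,j,r,s}| \leq T$ with high probability; it says nothing about $\sum_{r,s} U_{q,j,r,s}$ being $\leq T$ for each fixed $(q,j)$. (Relatedly, your tuple count is off: the pairs $(q,j)$ number only $\asymp \sqrt{N}\log N$ after eliminating $a$, while including $(r,s)$ pushes the count above $N^{7/5}$, not $N^{1+o(1)}$.) In the paper the union bound runs only over $(q,j)$ together with a dyadic range $r \sim M$, and one computes the second moment of the \emph{entire} off-diagonal sum $\sum_{r\sim M}\sum_{b}\sum_{k}(\cdots)$; this is packaged as Lemmas~\ref{l:novariation} (the case $j=0$) and~\ref{l:variation} ($j\neq 0$).

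This structural point also explains your misplacement of Exponential Sum Result~\ref{es2}. Expanding the second moment of the full sum produces, after triangle inequality, sums of the shape $\sum_{r_1 \sim M} \e\bigl(\tfrac{j(m_1-m_2)}{4\pi(r_1+1)}\bigr)$ (and likewise for $r_2$), which is exactly $\sum_n \e(\theta/(n+1))$ as in ESR~\ref{es2}. By contrast the phase in the subinterval index $k$ is \emph{linear}, so the $k$-sums are ordinary geometric sums, and the $b$-sums are Ramanujan sums bounded by $(|m_1-m_2|,q)(|m_3-m_4|,q)$. If $r$ is frozen in your union bound there is no $r$-sum left, and the ESR~\ref{es2} saving is unavailable; no Abel summation on the $c$-variable recovers it. Finally, threading the gcd factors $(|m_i-m_{i'}|,q)$ through the multiplicative constraint $m_1 m_3 = m_2 m_4$ is where most of the real work lies: the paper uses a Vaughan--Wooley factorisation together with a layered gcd decomposition, and ``divisor-type bounds'' considerably understates this.
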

The proposition will follow from the proceeding lemmas. Here and throughout, for $\theta \in \mathcal{D}$, we write $\theta = \frac{a}{q} + \frac{j}{4 \pi N}$ for some $q \leq 4 \pi \sqrt{N}$, $1 \leq a \leq q$ with $(a,q)=1$ and $j \in \bigl[\frac{-\sqrt{N}}{q}, \frac{\sqrt{N}}{q}\bigr] \cap \Z$.
\begin{lemma}\label{l:novariation}
For $f$ Steinhaus, for any $\varepsilon > 0$, with probability $1-o_{\varepsilon} (1)$, we have
\[ \max_{q \leq 4 \pi \sqrt{N}} \sum_{r \leq N^{0.2}} \sum_{(b,q)=1} \frac{1}{r(r+1)\varphi(q) \log N} \Bigl| \sum_{m \leq r} f(m) \e \Bigl( \frac{m b}{q} \Bigr)  \Bigr|^2 \ll {( \log N )}^{3/2 + \varepsilon} . \]   
\end{lemma}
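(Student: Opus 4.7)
The plan is to apply a Ramanujan-sum decomposition to collapse the sum over reduced residues $(b,q)=1$ into a sum over divisors of $q$, and then to reduce the max over $q$ to a max over a single divisor-indexed quantity that can be handled by a second-moment estimate. Using $\sum_{(b,q)=1} \e(ub/q) = c_q(u) = \sum_{d \mid (q,u)} d \mu(q/d)$, one has
\[ \frac{1}{\varphi(q)} \sum_{(b,q)=1} \Bigl| \sum_{m \leq r} f(m) \e (mb/q) \Bigr|^2 = \frac{1}{\varphi(q)} \sum_{d \mid q} d \mu(q/d) U_{d,r}, \]
where $U_{d,r} := \sum_{c \bmod d} \bigl|\sum_{m \leq r, \, m \equiv c \bmod d} f(m) \bigr|^2 \geq 0$. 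Setting $V_d := \sum_{r \leq N^{0.2}} U_{d,r}/(r(r+1))$ and using $\bigl|\sum_{d \mid q} d \mu(q/d) V_d\bigr| \leq \sigma(q) \max_{d \mid q} V_d$ together with $\sigma(q)/\varphi(q) \ll (\log \log q)^2$, the lemma reduces to showing $\max_{d \leq 4\pi\sqrt{N}} V_d \ll (\log N)^{5/2 + \varepsilon/2}$ with probability $1 - o_\varepsilon(1)$.

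To separate the deterministic and fluctuating contributions to $V_d$, I would telescope using $1/(r(r+1)) = 1/r - 1/(r+1)$ together with the martingale-type identity
\[ U_{d,r} - U_{d,r-1} = 2 \Re \bigl( f(r) \conj{T_{d, r \bmod d, r-1}} \bigr) + 1, \qquad T_{d,c,r} := \sum_{\substack{m \leq r \\ m \equiv c \bmod d}} f(m), \]
(only the residue class $c \equiv r \bmod d$ changes between $U_{d,r-1}$ and $U_{d,r}$). This yields
\[ V_d = U_{d,1} + 2 \Re E_d + \sum_{r=2}^{N^{0.2}} \frac{1}{r} - \frac{U_{d, N^{0.2}}}{N^{0.2}+1}, \qquad E_d := \sum_{r=2}^{N^{0.2}} \frac{f(r) \conj{T_{d, r \bmod d, r-1}}}{r}. \]
Since the boundary term is non-negative, $U_{d,1} = 1$, and the deterministic sum equals $0.2 \log N + O(1)$, we obtain the \emph{deterministic} upper bound $V_d \leq 0.2 \log N + O(1) + 2|E_d|$, so the boundary needs no further control.

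The heart of the proof will be the second-moment estimate $\E |E_d|^2 \ll (\log N)^3/d$. Expanding $|E_d|^2$ and using Steinhaus orthogonality in the form $\E [f(r_1)\conj{f(m_1)}\conj{f(r_2)} f(m_2)] = \mathbf{1}[r_1 m_2 = m_1 r_2]$, the second moment is a weighted count of $4$-tuples $(r_1, r_2, m_1, m_2)$ subject to $r_1 m_2 = m_1 r_2$, $m_i < r_i$, $m_i \equiv r_i \bmod d$, and $r_i \leq N^{0.2}$. Parametrising $r_1/r_2 = m_1/m_2 = p/q'$ in lowest terms, so that $(r_1, r_2) = (pk, q'k)$ and $(m_1, m_2) = (pk', q'k')$ for some $k' < k$, the congruences $p(k-k') \equiv q'(k-k') \equiv 0 \bmod d$ combined with $(p,q')=1$ force $d \mid k - k'$; this is the crucial $1/d$ saving. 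A routine Mertens-type estimate then gives $\E |E_d|^2 \ll (\log N)^3/d$. Markov's inequality yields $\p \bigl( |E_d| \geq (\log N)^{5/2 + \varepsilon/2} \bigr) \ll 1/(d (\log N)^{2+\varepsilon})$, and summing over $d \leq 4\pi \sqrt{N}$ gives a union bound of $O((\log N)^{-1-\varepsilon}) = o_\varepsilon(1)$. The hard part is the second-moment computation itself: without the factor $1/d$ extracted from the interaction between the Steinhaus pairing and the congruence constraints, the union bound over $d$ up to order $\sqrt{N}$ would fail outright, and everything else (Ramanujan decomposition, telescoping, Markov) is routine bookkeeping.
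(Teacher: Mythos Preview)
Your proof is correct and takes a genuinely different route from the paper's argument.

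The paper expands the square, removes the diagonal, splits $r$ dyadically, and then takes a union bound \emph{directly over $q \leq 4\pi\sqrt{N}$}. After Markov with second moments and Steinhaus orthogonality, it faces a sum over $4$-tuples with $m_1 m_3 = m_2 m_4$ weighted by $(|m_1-m_2|,q)(|m_3-m_4|,q)$ (from the bound $|c_q(n)|\le (|n|,q)$); it then applies the Vaughan--Wooley factorisation $m_1=a_1a_3,\ m_2=a_1a_4,\ m_3=a_2a_4,\ m_4=a_2a_3$ and works through several layers of GCD substitutions ($g_i,D_i,A_i,h,E_i,R,t_i$) to close the estimate. Your approach instead uses the M\"obius form of the Ramanujan sum to collapse the max over $q$ to a max over a single divisor parameter $d$, telescopes in $r$ to isolate the martingale-type fluctuation $E_d$, and then the second-moment count of $r_1 m_2=m_1 r_2$ with $m_i\equiv r_i \pmod d$ yields the clean saving $d\mid k-k'$ directly (your observation that $(p,q')=1$ forces $d\mid (k-k')$ is exactly the mechanism that, in the paper, is spread across the GCD bookkeeping). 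Both routes rest on the same arithmetic phenomenon---the congruence constraint interacting with the multiplicative pairing to give a $1/d$-type gain summable against $\sum_{d}1/d\ll\log N$---but yours reaches it with markedly less manipulation.

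What each buys: your argument is shorter and more transparent for this lemma; the bound $\sigma(q)/\varphi(q)\ll(\log\log q)^2$ and the drop of the non-negative boundary term are pleasant simplifications. The paper's set-up, on the other hand, is designed to run in parallel with the harder Lemma~\ref{l:variation} (the $j\neq 0$ case), where extra exponential factors $\e\bigl((m_1-m_2)(\cdot)\bigr)$ depending on $r$ and $k$ appear and the telescoping-in-$r$ trick would not cleanly separate a deterministic main term; there the dyadic-in-$r$ plus Vaughan--Wooley framework is reused essentially verbatim.
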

\begin{lemma}\label{l:variation}
For $f$ Steinhaus, for any $\varepsilon > 0$, with probability $1-o_{\varepsilon} (1)$,
\[ \max_{\substack{q \leq 4 \pi \sqrt{N} \\ 1 \leq |j| \leq \frac{\sqrt{N}}{q}}} \sum_{r \leq N^{0.2}} \sum_{(b,q)=1} \sum_{k=0}^{\frac{j N^{2/5}}{r(r+1)}} \frac{1}{N^{2/5} |j| \varphi(q) \log N } \Bigl| \sum_{m \leq r} f(m) \e \Bigl( m \Bigl( \frac{b}{q} + \frac{j}{4 \pi (r+1)} + \frac{k}{4 \pi N^{2/5}} \Bigr) \Bigr)  \Bigr|^2, \]
is $\ll {( \log N )}^{5/2 + \varepsilon}$.
\end{lemma}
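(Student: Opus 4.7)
The plan is to union bound over the $O(\sqrt{N}\log N)$ admissible pairs $(q,j)$, and for each such pair apply Markov's inequality at the second moment, in the spirit of Subsection~\ref{sb:uboutline}. Write $S(q,j)$ for the inner quantity. First I would expand
\[ \Bigl|\sum_{m \leq r} f(m) \e\bigl(m \phi_{r,b,k}\bigr)\Bigr|^2 = \sum_{m_1, m_2 \leq r} f(m_1)\conj{f(m_2)}\, \e\bigl((m_1-m_2)\phi_{r,b,k}\bigr),\]
where $\phi_{r,b,k} = b/q + j/(4\pi(r+1)) + k/(4\pi N^{2/5})$, and split off the diagonal $m_1 = m_2$. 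On the diagonal, the sum over $(b,q)=1$ collapses to $\varphi(q)$ and the $k$-sum has $\lfloor |j|N^{2/5}/(r(r+1))\rfloor+1$ terms; these combine with the normalisation to leave an $O(1)$ deterministic contribution, essentially $\sum_{r \leq N^{0.2}} 1/((r+1)\log N) = O(1)$.

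The off-diagonal remainder $S_{\text{off}}(q,j)$ has mean zero, so I would bound $\E|S_{\text{off}}(q,j)|^2$ and invoke Markov. Expanding the square and using the Steinhaus orthogonality relation $\E[f(m_1)\conj{f(m_2)}\conj{f(m_3)}f(m_4)] = \mathbf{1}[m_1 m_4 = m_2 m_3]$ restricts the count to quadruples $m_1 m_4 = m_2 m_3$ with $m_1 \neq m_2 \leq r_1$, $m_3 \neq m_4 \leq r_2$, weighted by products of exponentials in $b_1,b_2,k_1,k_2,r_1,r_2$. The $b_1,b_2$ summations then produce Ramanujan sums $c_q(m_1-m_2)$ and $c_q(m_3-m_4)$, bounded by the relevant gcd with $q$; the geometric $k_1,k_2$ summations are bounded by $\min(|j|N^{2/5}/r^2,\, N^{2/5}/|m_1-m_2|)$, and similarly for the second factor. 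What remains are sums over $r_1,r_2$ carrying exponential factors of shape $\e(c/(r+1))$, which is exactly where I would apply Exponential Sum Result~\ref{es2} after a dyadic decomposition of $r_1$ and $r_2$.

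Combining these ingredients with standard divisor-function estimates to count quadruples $m_1 m_4 = m_2 m_3$ with prescribed gcd conditions, one should be able to extract a small power-of-$N$ saving in the variance, concretely $\E|S_{\text{off}}(q,j)|^2 \ll (\log N)^A \, N^{-1/2-\delta}$ for some fixed $\delta > 0$ and some absolute $A$, uniformly in $(q,j)$. Markov at threshold $(\log N)^{5/2+\varepsilon}$ followed by the union bound over the $O(\sqrt{N}\log N)$ pairs then yields the required $o_\varepsilon(1)$ failure probability.

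The main obstacle I expect is balancing the various cancellations so that the log exponent matches $5/2+\varepsilon$ cleanly. Result~\ref{es2} is only nontrivial when $|\theta|$ lies in the window $[M, M^3]$, so regimes where the auxiliary phase is too small or too large must be handled by the trivial bound; simultaneously, one must exploit the constraint $1 \leq |j| \leq \sqrt{N}/q$ to keep the geometric savings in $k$, the Ramanujan savings in $b$, and the van~der~Corput savings in $r$ balanced against the divisor bounds on the quadruple count. Orchestrating these competing estimates so that every factor of $\log N$ is accounted for, rather than carelessly thrown away, is where I anticipate the proof requires the most care.
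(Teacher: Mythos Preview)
Your overall plan matches the paper's proof almost step for step: remove the diagonal, union bound over $(q,j)$, Markov at the second moment, Steinhaus orthogonality to reduce to quadruples on $m_1 m_4 = m_2 m_3$, Ramanujan sums for the $b$-sums, geometric bounds for the $k$-sums, Exponential Sum Result~\ref{es2} for the $r$-sums after a dyadic split. So the architecture is right.

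The gap is in your quantitative claim for the second moment. You assert that $\E|S_{\mathrm{off}}(q,j)|^2 \ll (\log N)^A N^{-1/2-\delta}$ \emph{uniformly} in $(q,j)$, and then beat the union bound over $O(\sqrt{N}\log N)$ pairs by the spare $N^{-\delta}$. This uniform power saving is false. Take $q=1$, $j=1$: the Ramanujan sum is trivially $1$, the geometric $k$-sum bound $\min(|j|N^{2/5}/M^2,\,N^{2/5}/|m_1-m_2|)$ is the trivial $N^{2/5}/M^2$ (since $|m_1-m_2|\le 2M<M^2$), and Exponential Sum Result~\ref{es2} with $\theta=j(m_1-m_2)$ of size at most $2M$ gives only the trivial bound $M$ for the $r$-sum. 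Counting quadruples then yields $\E|S_{\mathrm{off}}^{(M)}(1,1)|^2 \asymp (\log M)/(\log N)^2$, which is only polylogarithmically small. No power of $N$ is saved, and a naive union bound over $\sqrt{N}$ pairs at that rate diverges.

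What actually makes the argument close is that the normalisation $1/(N^{2/5}|j|\varphi(q)\log N)$ is part of $S(q,j)$, so the second moment carries a factor $1/(j^2\varphi(q)^2)$. The paper does \emph{not} bound each $\E|S_{\mathrm{off}}(q,j)|^2$ uniformly and then multiply by the number of pairs; instead it writes the union bound plus Markov as a single sum $\sum_{q}\sum_{j}\E|S_{\mathrm{off}}^{(M)}(q,j)|^2$ and evaluates that sum directly, so that the weights $1/(j^2\varphi(q)^2)$ do the work of killing the large-$q$ and large-$j$ contributions. After inserting the Ramanujan, geometric, and van der Corput bounds, one is left with an arithmetic sum over $d_1,d_2,q,j$ and the quadruple parameters; the paper handles this with the Vaughan--Wooley factorisation $m_1=a_1a_3$, $m_2=a_1a_4$, $m_3=a_2a_4$, $m_4=a_2a_3$ (as in the proof of Lemma~\ref{l:novariation}) and a somewhat delicate case split on the relative sizes of $n\beta_1,n\beta_2,M^2$. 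The outcome is that the \emph{total} sum over $(q,j)$ of the second moments is polylogarithmic, which is exactly enough. Your proposal has all the right ingredients; what you need to change is to drop the uniform power-saving claim and instead bound $\sum_{q,j}\E|S_{\mathrm{off}}(q,j)|^2$ as a whole.
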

When $j$ is positive, the sum over $k$ should be taken up to the largest integer below $\frac{j N^{2/5}}{r(r+1)}$. Note that the upper range of summation can be $<1$ only when $|j|=1$ and $r = \lfloor N^{0.2} \rfloor$. This specific case can be bounded using trivial bounds uniformly over $q \leq 4 \pi \sqrt{N}$, so one should not worry about it causing any issues. In Lemma~\ref{l:variation}, the sum over $k$ should be interpreted as taking negative values when $j$ is negative, in which case we sum up to the smallest integer above $\frac{j N^{2/5}}{r(r+1)}$: by convention we take this sum to be positive, despite the upper range being below the lower range. We maintain this convention throughout this section, and use $\sum_{(b,q)=1}$ as shorthand for the sum over $\{ 1 \leq b \leq q : \, (b,q)=1\}$.
\begin{proof}[Proof of Proposition~\ref{p:mainprop}, assuming Lemmas~\ref{l:novariation} and~\ref{l:variation}.]
Let $\theta = \frac{a}{q} + \frac{j}{4 \pi N}$. As mentioned in the outline of the proof (section~\ref{sb:uboutline}), we write the sum $\frac{1}{N} \sum_{N^{0.8} < p \leq N} \bigl| \sum_{m \leq N/p} f(m) \e (mp \theta) \bigr|^2$ as \begin{equation} \frac{1}{N} \sum_{r \leq N^{0.2}} \sum_{N/(r+1) < p \leq N/r} \Bigl| \sum_{m \leq r} f(m) \e (mp \theta) \Bigr|^2 . \label{equ:3.1} \tag{3.1} \end{equation} We first handle the case where $j \neq 0$. If $j > 0$, we bound the above by
\[
\frac{1}{N} \sum_{r \leq N^{0.2}} \sum_{k=0}^{\frac{j N^{2/5}}{r(r+1)}} \sum_{\frac{N}{r+1} + \frac{k N^{3/5}}{j} < p \leq \frac{N}{r+1} + \frac{(k+1) N^{3/5}}{j} } \Bigl| \sum_{m \leq r} f(m) \e \Bigl( m p \Bigl( \frac{a}{q} + \frac{j}{4 \pi N} \Bigr) \Bigr)  \Bigr|^2 .
\]
In the case $j < 0$, we replace $k+1$ by $k-1$ in the upper bound on the summation over $p$, and the analogous change should also be made in the calculations that follow. \\ Define $s^{(r,j)}_k = \frac{N}{r+1} + \frac{k N^{3/5}}{j}$. Now, approximating $\frac{p j}{4 \pi N}$ by elements of the form $\frac{j}{4 \pi (r+1)} + \frac{k}{4 \pi N^{2/5}} $ and using the fact that $|A+B|^2 \ll |A|^2 + |B|^2$, we find that the above display is
\[
\ll \frac{1}{N} \sum_{r \leq N^{0.2}} \sum_{k=0}^{\frac{j N^{2/5}}{r(r+1)}} \sum_{s^{(r,j)}_k < p \leq s^{(r,j)}_{k+1}} \Bigl| \sum_{m \leq r} f(m) \e \Bigl( m \Bigl( \frac{p a}{q} + \frac{j}{4 \pi (r+1)} + \frac{k}{4 \pi N^{2/5}} \Bigr) \Bigr)  \Bigr|^2 ,
\]
plus an error of size
\begin{align*}
\ll \frac{1}{N} \sum_{r \leq N^{0.2}} \sum_{N/(r+1) < p \leq N/r} \Bigl| \sum_{m \leq r} \frac{m}{N^{2/5}} \Bigr|^2 \ll \frac{1}{N^{9/5}} \sum_{r \leq N^{0.2}} \sum_{N/(r+1) < p \leq N/r} r^4 = o(1). 
\end{align*} 
Importantly, the size of this error term is uniform over all possible $\theta = \frac{a}{q} + \frac{j}{4 \pi N}$, and it is negligible in the context of Proposition~\ref{p:mainprop}. Seeing as $p a \Mod{q}$ will uniformly distribute over residue classes $\Mod{q}$ when $p$ runs over a sufficiently large interval, we write this main contribution as
\[
\frac{1}{N} \sum_{r \leq N^{0.2}} \sum_{(b,q)=1} \sum_{k=0}^{\frac{j N^{2/5}}{r(r+1)}}  \tilde{C}_{r, \theta, k, b} \Bigl| \sum_{m \leq r} f(m) \e \Bigl( m \Bigl( \frac{b}{q} + \frac{j}{4 \pi (r+1)} + \frac{k}{4 \pi N^{2/5}} \Bigr) \Bigr)  \Bigr|^2
\]
where \[ \tilde{C}_{r, \theta, k, b} = \# \Bigl\{ s^{(r,j)}_k  < p \leq s^{(r,j)}_{k+1} : \, pa \equiv b \Mod q \Bigr\}. \]
Applying the Brun--Titchmarsh bound for primes in arithmetic progressions gives $\tilde{C}_{r, \theta, k, b} \leq \frac{2 N^{3/5}}{|j| \varphi(q) \log ( N^{3/5}/ q |j| )}$. We remark that the bound only holds when $q$ is smaller than $N^{3/5}/|j|$, and seeing as $q|j| \leq \sqrt{N}$, this is certainly satisfied. Therefore, for $j \neq 0$, we obtain an upper bound for~\eqref{equ:3.1} of
\[
\ll \sum_{r \leq N^{0.2}} \sum_{(b,q)=1} \sum_{k=0}^{\frac{j N^{2/5}}{r(r+1)}} \frac{1}{N^{2/5} |j| \varphi(q) \log N } \Bigl| \sum_{m \leq r} f(m) \e \Bigl( m \Bigl( \frac{b}{q} + \frac{j}{4 \pi (r+1)} + \frac{k}{4 \pi N^{2/5}} \Bigr) \Bigr)  \Bigr|^2, 
\]
plus a $o(1)$ term. We note that our sum no longer depends on the parameter $a$ (where $\theta = \frac{a}{q} + \frac{j}{4 \pi N}$). Seeing as the implicit constant is uniform over $\theta$, the maximum of our sum over $q \leq 4 \pi \sqrt{N}, 1 \leq |j| \leq \frac{\sqrt{N}}{q}$ can be bounded using Lemma~\ref{l:variation}. In the case where $j=0$ (i.e. $\theta = \frac{a}{q}$ for some $q \leq 4 \pi \sqrt{N}$ and $(a,q)=1$), we can start at~\eqref{equ:3.1} and immediately apply the Brun--Titichmarsh bound similarly. Having done this, one finds that the maximum of the sum in Proposition~\ref{p:mainprop} over $\theta = \frac{a}{q}$ with $q \leq 4 \pi \sqrt{N}$, $(a,q)=1$ can be bounded above using Lemma~\ref{l:novariation}. 
\end{proof}
\begin{remark}\label{r:optimising}
\normalfont~For a slightly improved result, one could instead approximate $\frac{p j}{4 \pi N}$ by points of the form $\frac{j}{4 \pi (r+1)} + \frac{100 k}{4 \pi N^{3/7}} $, say, and take $Q = \frac{N^{3/7}}{2}$ (see Remark~\ref{r:interplay}). This would allow one to upper bound the sum $\sum_{n \leq N, \, P(n) \geq N^{5/7} } f(n) \e (n \theta) $ instead. This alteration makes Lemma~\ref{l:novariation} more technical, so these improvements have not been implemented here.
\end{remark}
We proceed with proving Lemmas~\ref{l:novariation} and~\ref{l:variation}. Once this is done, a short application of Proposition~\ref{p:mainprop} will complete the proof.
\begin{proof}[Proof of Lemma~\ref{l:novariation}]
To begin the proof, one may immediately try to apply a union bound over $q \leq 4 \pi \sqrt{N}$, which will be efficient assuming that there are few realisations of $f$ for which the sum is large over many values of $q$. However, regardless of $f$, and the value of $q$, our variance has a significant deterministic contribution from the diagonal term in the innermost sum $\bigl| \sum_{m \leq r} f(m) \e (m \alpha)  \bigr|^2$, and taking a union bound over it would be inefficient. This contribution reflects the mean of our conditional variance. We begin by separating it out, writing our innermost sum in Lemma~\ref{l:novariation} as
\[ \sum_{m_1 \leq r} \sum_{m_2 \leq r} f(m_1) \conj{f(m_2)} \e \biggl( \frac{b(m_1 - m_2 )}{q} \biggr). \]
The case where $m_1 = m_2$ contributes
\[ \ll \sum_{r \leq N^{0.2}} \sum_{(b,q)=1} \frac{1}{(r+1) \varphi(q) \log N} \ll \sum_{r \leq N^{0.2}} \frac{1}{(r+1) \log N} \ll  1, \]
to the full sum, regardless of $q$. We can disregard this diagonal contribution seeing as it is $\ll {(\log N)}^{3/2 + \varepsilon}$. One may interpret what follows as calculating the variance of our conditional variance, essentially equating to a fourth moment bound. For simplicity, we split the sums over $r$ into dyadic ranges, letting $r \sim M$ denote the range $M \leq r \leq 2M$. We will calculate second moments of this sum, and it will be helpful to use the fact that the duplicated sums over $r$ both run over the same orders of magnitude. To prove Lemma~\ref{l:novariation}, it suffices to prove that, for any $M \leq \frac{N^{0.2}}{2}$, the probability that
\[ \max_{q \leq 4 \pi \sqrt{N}} \sum_{r \sim M} \sum_{(b,q)=1} \frac{1}{r(r+1)\varphi(q) \log N} \sum_{\substack{m_1, m_2 \leq r \\ m_1 \neq m_2}} f(m_1) \conj{f(m_2)} \e \biggl( \frac{b( m_1 - m_2)}{q} \biggr)  \]
is larger than ${(\log N)}^{1/2 + \varepsilon}$ is $o_{\varepsilon}(\frac{1}{\log N})$. Note the changes by factors of $\log N$ due to taking a union bound over dyadically spaced $M$. To bound this probability, we apply the union bound followed by Markov's inequality for second moments, arriving at the upper bound
\begin{multline*} \ll \frac{1}{M^2 {{(\log N)}^{3 + 2 \varepsilon}}} \sum_{q \leq 4 \pi \sqrt{N}} \frac{1}{{\varphi(q)}^2} \sum_{\substack{m_1, m_2 \leq 2M, \, m_1 \neq m_2 \\ m_3, m_4 \leq 2M, \, m_3 \neq m_4 \\ m_1 m_3 = m_2 m_4}} \\ \biggl| \sum_{(b_1, q)=1} \e \biggl(\frac{b_1(m_1 - m_2)}{q}\biggr) \sum_{(b_2, q)=1} \e \biggl(\frac{b_2(m_3 - m_4)}{q}\biggr) \biggr| , \end{multline*}
after rearranging and applying the triangle inequality. Here we have used the fact that $\sum_{\max{ \{m_1, m_2, M \} } \leq r_1 \leq 2M } \frac{1}{r_1 (r_1 + 1)} \ll \frac{1}{M}$, and likewise for the sum over $r_2$. We notice that the sums over $b_1$ and $b_2$ are Ramanujan sums $c_q (n) = \sum_{(b,q)=1} \e (bn/q)$, so we apply the bound $|c_q (n)| \leq (|n|,q)$ (see, for example, exercise 3~\citet[Section~4.1.1]{MVmultnt}). This gives an upper bound for the previous display of
\begin{align*} 
\ll & \frac{1}{M^2 {{(\log N)}^{3 + 2 \varepsilon}}} \sum_{q \leq 4 \pi \sqrt{N}} \frac{1}{{\varphi(q)}^2} \sum_{\substack{m_1, m_2 \leq 2M, \, m_1 \neq m_2 \\ m_3, m_4 \leq 2M, \, m_3 \neq m_4 \\ m_1 m_3 = m_2 m_4}} (|m_1 - m_2|, q) (|m_3 - m_4|, q),
\end{align*}
and we need to show that this is $o_\varepsilon \bigl( \frac{1}{\log N} \bigr)$. We deal with the greatest common divisor terms by rewriting them as $\sum_{d| q, \, |m_1 - m_2| } \varphi (d) \leq \sum_{d| q, \, |m_1 - m_2| } d $ (and similarly for $m_3, m_4$). By doing this, we get an upper bound for our probability of
\begin{align*} 
\ll & \frac{1}{M^2 {{(\log N)}^{3 + 2 \varepsilon}}} \sum_{d_1, d_2 \leq 2M} d_1 d_2 \sum_{\substack{q \leq 4 \pi \sqrt{N} \\ q \equiv 0 \Mod [d_1,d_2]}} \frac{1}{{\varphi(q)}^2} \sum_{\substack{m_1, m_2 \leq 2M, \, m_1 \neq m_2 \\ m_3, m_4 \leq 2M, \, m_3 \neq m_4 \\ m_1 m_3 = m_2 m_4 \\ m_1 \equiv m_2 \Mod d_1 \\ m_3 \equiv m_4 \Mod d_2}} 1.
\end{align*}
We proceed by rewriting the quadruple sum over $m_1, m_2, m_3$ and $m_4$ using a factorisation of Vaughan and Wooley (see~\cite[Lemma~2.3]{BNR}, or~\cite[Section~8]{VaughanWooley} for the original work).
Specifically, we let $m_1 = a_1 a_3$, $m_2 = a_1 a_4$, $m_3 = a_2 a_4$ and $m_4 = a_2 a_3$, subject to the conditions $a_1, a_2 \leq 2M$, $a_3, a_4 \leq \min \bigl\{ \frac{2M}{a_1}, \frac{2M}{a_2} \bigr\}$, $a_3 \neq a_4$ and $a_3 \equiv a_4 \Mod \bigl[ \frac{d_1}{(a_1, d_1)}, \frac{d_2}{(a_2, d_2)} \bigr]$. Furthermore, by symmetry, we can assume that $a_2 \leq a_1$. This gives the upper bound
\begin{align*}
& \ll \frac{1}{M^2 {{(\log N)}^{3 + 2 \varepsilon}}} \sum_{d_1, d_2 \leq 2M} d_1 d_2 \sum_{\substack{q \leq 4 \pi \sqrt{N} \\ q \equiv 0 \Mod [d_1,d_2]}} \frac{1}{{\varphi(q)}^2} \sum_{a_2 \leq a_1 \leq 2M} \sum_{\substack{a_3, a_4 \leq \frac{2M}{a_1}, a_3 \neq a_4 \\ a_3 \equiv a_4 \Mod \bigl[ \frac{d_1}{(a_1, d_1)}, \frac{d_2}{(a_2, d_2)} \bigr]}} 1 \\
& \ll \frac{{(\log \log N)}^2}{{(\log N)}^{3 + 2 \varepsilon}} \sum_{d_1, d_2 \leq 2M} \frac{d_1 d_2}{{[d_1, d_2]}^2} \sum_{a_2 \leq a_1 \leq 2M} \frac{1}{a_1^2 \bigl[ \frac{d_1}{(a_1, d_1)}, \frac{d_2}{(a_2, d_2)} \bigr]} , \label{equ:3.2} \tag{3.2}
\end{align*}
where we have used the fact that $\varphi(n) \gg n/{\log \log n} $ to perform the sum over $q$. To proceed, we let $g_1 = (a_1, d_1)$, $D_1 = \frac{d_1}{g_1}$ and $A_1 = \frac{a_1}{g_1}$. These imply that $(D_1, A_1)=1$, but we can drop this condition without much loss. We define $g_2, D_2$ and $A_2$ analogously. Our variables satisfy the bounds $g_1 \leq 2M$, $D_1 \leq \frac{2M}{g_1}$, $A_1 \leq \frac{2M}{g_1}$, $g_2 \leq A_1 g_1$, $D_2 \leq \frac{2M}{g_2}$ and $A_2 \leq \frac{A_1 g_1}{g_2}$, where we have used the fact that $a_2 \leq a_1$. One may then deduce the upper bound
\begin{align*}
\ll \frac{{(\log \log N)}^2}{{{(\log N)}^{3 + 2\varepsilon}}} \sum_{g_1 \leq 2M} \frac{1}{g_1^{3}} \sum_{A_1 \leq \frac{2M}{g_1}} \frac{1}{A_1^{2}} \sum_{g_2 \leq A_1 g_1} \frac{1}{g_2} \sum_{A_2 \leq \frac{A_1 g_1}{g_2}} \sum_{\substack{D_1 \leq \frac{2M}{g_1} \\ D_2 \leq \frac{2M}{g_2}}} \frac{(D_1, D_2) {{(D_1 g_1, D_2 g_2)}^2}}{D_1^2 D_2^2}, \tag{3.3} \label{equ:3.3}
\end{align*}
for our probability. Performing the sum over $A_2$, and letting $h = (D_1, D_2) \leq \min\{\frac{2M}{g_1}, \frac{2M}{g_2}\}$, $E_1 = \frac{D_1}{h} \leq \frac{2M}{g_1 h}$ and $E_2 = \frac{D_2}{h} \leq \frac{2M}{g_2 h}$, this is
\begin{align*}
& \ll \frac{{(\log \log N)}^2}{{{(\log N)}^{3 + 2\varepsilon}}} \sum_{g_1 \leq 2M} \frac{1}{g_1^2} \sum_{A_1 \leq \frac{2M}{g_1}}  \frac{1}{A_1} \sum_{ g_2 \leq A_1 g_1} \frac{1}{g_2^2} \sum_{h \leq \min\{\frac{2M}{g_1}, \frac{2M}{g_2}\}} \frac{1}{h} \sum_{\substack{E_1 \leq \frac{2M}{g_1 h} \\ E_2 \leq \frac{2M}{g_2 h}}} \frac{{(E_1 g_1, E_2 g_2)}^2}{E_1^2 E_2^2}.
\end{align*}
Writing ${(E_1 g_1, E_2 g_2)}^2 \leq \sum_{R \mid E_1 g_1, E_2 g_2} R^2 $, we find that the innermost sum over $E_1$ and $E_2$ is $\ll \sum_{R \leq \frac{2M}{h}} \frac{{(R, g_1)}^2 {(R, g_2)}^2}{R^2}$. Therefore, the above is
\begin{align*}
& \ll \frac{{(\log \log N)}^2}{{{(\log N)}^{3 + 2\varepsilon}}} \sum_{g_1 \leq 2M} \frac{1}{g_1^2} \sum_{A_1 \leq \frac{2M}{g_1}}  \frac{1}{A_1} \sum_{ g_2 \leq A_1 g_1} \frac{1}{g_2^2} \sum_{h \leq \min\{\frac{2M}{g_1}, \frac{2M}{g_2}\}} \frac{1}{h} \sum_{R \leq 2M/h} \frac{{(R,g_1)}^2 {(R, g_2)}^2}{R^2} .
\end{align*}
Similarly, since ${(R, g_1)}^2 \leq \sum_{t_1 \mid R, g_1} t_1^2$ (and likewise for ${(R, g_2)}^2$), this is
\begin{align*}
& \ll \frac{{(\log \log N)}^2}{{{(\log N)}^{2 + 2\varepsilon}}} \sum_{t_1, t_2 \leq 2M} \frac{t_1^2 t_2^2}{{[t_1,t_2]}^2} \sum_{\substack{g_1 \leq 2M \\ g_1 \equiv 0 \Mod t_1}} \frac{1}{g_1^2} \sum_{A_1 \leq \frac{2M}{g_1}}  \frac{1}{A_1} \sum_{\substack{ g_2 \leq A_1 g_1 \\ g_2 \equiv 0 \Mod t_2}} \frac{1}{g_2^2} \ll \frac{{(\log \log N)}^2}{{{(\log N)}^{1 + 2\varepsilon}}},
\end{align*}
which is $o_{\varepsilon} \bigl( \frac{1}{\log N} \bigr)$, as required. 
\end{proof}
\begin{proof}[Proof of Lemma~\ref{l:variation}]
We now move on to the term
\[ \sum_{r \leq N^{0.2}} \sum_{(b,q)=1} \sum_{k=0}^{\frac{j N^{2/5}}{r(r+1)}} \frac{1}{N^{2/5} |j| \varphi(q) \log N } \Bigl| \sum_{m \leq r} f(m) \e \Bigl( m \Bigl( \frac{b}{q} + \frac{j}{4 \pi (r+1)} + \frac{k}{4 \pi N^{2/5}} \Bigr) \Bigr)  \Bigr|^2, \]
and we wish to bound the maximum over $q \leq 4 \pi \sqrt{N}$ and $1 \leq |j| \leq \frac{\sqrt{N}}{q}$. We similarly remove the diagonal contribution from the innermost sum, and assume that $j$ is non-negative (the case where it is negative can be handled analogously). To prove Lemma~\ref{l:variation}, we just need to show that the probability of the event 
\begin{align*} \max_{\substack{q \leq 4 \pi \sqrt{N} \\ 1 \leq j \leq \frac{\sqrt{N}}{q}}} & \Biggl[ \sum_{r \leq N^{0.2}} \sum_{(b,q)=1} \sum_{k=0}^{\frac{j N^{2/5}}{r(r+1)}} \frac{1}{N^{2/5} j \varphi(q) \log N} \\ & \sum_{\substack{m_1, m_2 \leq r \\ m_1 \neq m_2}} f(m_1) \conj{f(m_2)} \e \Bigl( (m_1 - m_2 ) \Bigl( \frac{b}{q} + \frac{j}{4 \pi (r+1)} + \frac{k}{4 \pi N^{2/5}} \Bigr) \Bigr) \Biggr] \geq {(\log N)}^{5/2 + \varepsilon},  \end{align*}
is $o_{\varepsilon} (1)$. Breaking the sum over $r$ into dyadic intervals, it suffices to show that the probability of the event
\begin{align*} \max_{\substack{q \leq 4 \pi \sqrt{N} \\ 1 \leq j \leq \frac{\sqrt{N}}{q}}} \Biggl[ & \sum_{r \sim M} \sum_{(b,q)=1} \sum_{k = 0}^{\frac{j N^{2/5}}{r(r+1)}} \frac{1}{N^{2/5} j \varphi(q) \log N} \\ & \sum_{\substack{m_1, m_2 \leq r \\ m_1 \neq m_2}} f(m_1) \conj{f(m_2)} \e \Bigl( (m_1 - m_2 ) \Bigl( \frac{b}{q} + \frac{j}{4 \pi (r+1)} + \frac{k}{4 \pi N^{2/5}} \Bigr) \Bigr) \Biggr] \geq {(\log N)}^{3/2 + \varepsilon} , \end{align*}
is $o_{\varepsilon} \bigl(\frac{1}{\log N}\bigr)$ for $M \leq \frac{N^{0.2}}{2}$. Similarly to before, we apply the union bound and Markov's inequality with second moments to obtain an upper bound for this probability of
\begin{align*}
&\frac{1}{{(\log N)}^{5 + 2 \varepsilon}}\sum_{q \leq 4 \pi \sqrt{N}} \sum_{j \leq \frac{\sqrt{N}}{q}} \frac{1}{N^{4/5} j^2 {\varphi(q)}^2} \sum_{\substack{m_1, m_2 \leq 2M \\ m_3, m_4 \leq 2M \\ m_1 \neq m_2, \, m_3 \neq m_4 \\ m_1 m_3 = m_2 m_4 }} \sum_{\substack{r_1, r_2 \sim M \\ m_1, m_2 \leq r_1 \\ m_3, m_4 \leq r_2}} \sum_{\substack{(b_1,q)=1\\(b_2,q)=1}} \sum_{k_1, k_2 = 0}^{\frac{j N^{2/5}}{r(r+1)}} \\
& \e \Bigl( (m_1 - m_2 ) \Bigl( \frac{b_1}{q} + \frac{j}{4 \pi (r_1+1)} + \frac{k_1}{4 \pi N^{2/5}} \Bigr) \Bigr) \e \Bigl( (m_3 - m_4 ) \Bigl( \frac{b_2}{q} + \frac{j}{4 \pi (r_2+1)} + \frac{k_2}{4 \pi N^{2/5}} \Bigr) \Bigr).
\end{align*}
Applying the triangle inequality, we obtain a bound for the summand in the sum over $m_1, m_2, m_3, m_4$ of
\begin{align*}
\Bigl| \sum_{(b_1,q)=1} \e \Bigl(\frac{b_1 (m_1 - m_2) }{q} \Bigr) \sum_{k_1 = 0}^{\frac{j N^{2/5}}{r(r+1)}} \e \Bigl( \frac{k_1 (m_1 - m_2)}{4 \pi N^{2/5}} \Bigr) \sum_{\max\{m_1,m_2, M\} \leq r_1 \leq 2M} \e \Bigl( \frac{j (m_1 - m_2)}{r_1 + 1} \Bigr) \Bigr| ,
\end{align*}
along with the corresponding sums involving $m_3$ and $m_4$. Again, the sums over $b_1$ and $b_2$ are Ramanujan sums and contribute $ \, \leq (|m_1 - m_2|,q) (|m_3 - m_4|,q)$. The sums over $k_1$ and $k_2$ are geometric, so we upper bound their contribution by
\begin{align*} \ll & \min \Biggl\{ \frac{j N^{2/5}}{M^2}, \frac{1}{\| \frac{m_1 - m_2}{N^{2/5}} \|} \Biggr\} \min \Biggl\{ \frac{j N^{2/5}}{M^2}, \frac{1}{\| \frac{m_3 - m_4}{N^{2/5}} \|} \Biggr\}. \end{align*}
For ease of notation, we let
\begin{align*}
\mathcal{S}^{(M)}_{\mathbf{m},j} \coloneqq \min \Biggl\{ \frac{j}{M^2}, \frac{1}{| m_1 - m_2 |} \Biggr\} \min \Biggl\{ \frac{j}{M^2}, \frac{1}{| m_3 - m_4 |} \Biggr\} , \end{align*}
so that the sums over $k_1$ and $k_2$ contribute $\ll N^{4/5} \mathcal{S}^{(M)}_{\mathbf{m},j}$. Now, by Exponential Sum Result~\ref{es2}, the sums over $r_1$ and $r_2$ can be bounded above by a constant times
\begin{align*}
\mathcal{T}^{(M)}_{\mathbf{m},j} \coloneqq \biggl( \frac{|\theta_1|^{1/2}}{M^{1/2}} + \frac{M^{3/2}}{|\theta_1|^{1/2}} \biggr) \biggl(\frac{|\theta_2|^{1/2}}{M^{1/2}} + \frac{M^{3/2}}{|\theta_2|^{1/2}} \biggr),
\end{align*}
where $\theta_1 = j (m_1 - m_2)$ and $\theta_2 = j (m_3 - m_4)$. Overall, these give an upper bound on our probability of
\begin{align*} 
\ll \frac{1}{{(\log N)}^{5 + 2 \varepsilon}} \sum_{q \leq 4 \pi \sqrt{N}} \frac{1}{{\varphi(q)}^2} \sum_{j \leq \frac{\sqrt{N}}{q}} \frac{1}{j^2} \sum_{\substack{m_1, m_2 \leq 2M \\ m_3, m_4 \leq 2M \\ m_1 \neq m_2, \, m_3 \neq m_4 \\ m_1 m_3 = m_2 m_4 }} (|m_1 - m_2|, q) (|m_3 - m_4|, q) \mathcal{S}^{(M)}_{\mathbf{m},j} \mathcal{T}^{(M)}_{\mathbf{m},j}.
\end{align*}
Writing $(n,q) \leq \sum_{d | n,q} d$ and changing the order of summation, this is bounded above by 
\begin{align*} 
\frac{1}{{(\log N)}^{5 + 2 \varepsilon}} \sum_{d_1, d_2 \leq 2M} d_1 d_2 \sum_{\substack{q \leq 4 \pi \sqrt{N} \\ q \equiv 0 \Mod [d_1,d_2]}} \frac{1}{{\varphi(q)}^2} \sum_{j \leq \frac{\sqrt{N}}{q}} \frac{1}{j^2} \sum_{\substack{m_1, m_2 \leq 2M \\ m_3, m_4 \leq 2M \\ m_1 \neq m_2, \, m_1 \equiv m_2 \Mod d_1 \\ m_3 \neq m_4, \, m_3 \equiv m_4 \Mod d_2 \\ m_1 m_3 = m_2 m_4 }} \mathcal{S}^{(M)}_{\mathbf{m},j} \mathcal{T}^{(M)}_{\mathbf{m},j}. \label{equ:3.4} \tag{3.4}
\end{align*}
Again we use symmetry and the change of variables used in the proof of Lemma~\ref{l:novariation} (preceeding display~\eqref{equ:3.2}) to bound the innermost sum over $m_1, m_2, m_3$ and $m_4$ by
\begin{multline*} \sum_{a_2 \leq a_1 \leq 2M } \sum_{\substack{1 \leq a_3, a_4 \leq \frac{2M}{a_1}, a_3 \neq a_4 \\ a_3 \equiv a_4 \Mod \bigr[ \frac{d_1}{(a_1, d_1)}, \frac{d_2}{(a_2, d_2)} \bigl] }} \Biggl[  \min \Biggl\{\frac{j}{M^2}, \frac{j}{|\theta_1|} \Biggr\} \min \Biggl\{\frac{j}{M^2}, \frac{j}{|\theta_2|} \Biggr\} \\ \biggl( \frac{|\theta_1|^{1/2}}{M^{1/2}} + \frac{M^{3/2}}{|\theta_1|^{1/2}} \biggr) \biggl(\frac{|\theta_2|^{1/2}}{M^{1/2}} + \frac{M^{3/2}}{|\theta_2|^{1/2}} \biggr) \Biggr] ,
\end{multline*}
where $\theta_1 = a_1 j (a_3 - a_4)$ and $\theta_2 = a_2 j (a_4 - a_3)$.
\begin{remark}\label{r:rademachercase}
\normalfont~This is where we encounter difficulties in the case of Rademacher $f$: the change of variables gives ten parameters, as opposed to four in the Steinhaus case, and the summand is more complicated than in the proof of Lemma~\ref{l:novariation}.
\end{remark}
Noting that $|\theta_2| \leq |\theta_1|$, and performing a similar analysis as in the previous case, this is smaller than a positive constant times
\begin{align*}
\sum_{a_2 \leq a_1 \leq 2M} & \frac{M}{a_1} \sum_{1 \leq n \leq \frac{2M}{a_1 \bigl[\frac{d_1}{(a_1,d_1)}, \frac{d_2}{(a_2,d_2)}\bigr]}}\min \Biggl\{ \frac{j^2}{M^4}, \frac{j^2}{n \beta_1 M^2}, \frac{j^2}{n^2 \beta_1 \beta_2} \Biggr\} \biggl( \frac{n \beta_1^{1/2} \beta_2^{1/2}}{M} + \frac{\beta_1^{1/2} M}{\beta_2^{1/2}} + \frac{M^3}{n \beta_1^{1/2} \beta_2^{1/2}} \biggr), \label{equ:3.5} \tag{3.5}
\end{align*}
where $\beta_1 = a_1 j \bigl[\frac{d_1}{(a_1, d_1)}, \frac{d_2}{(a_2, d_2)}\bigr]$ and $\beta_2 = a_2 j \bigl[\frac{d_1}{(a_1, d_1)}, \frac{d_2}{(a_2, d_2)}\bigr]$. We split this sum into three parts according to the following cases and handle the contribution of each to~\eqref{equ:3.4}:
\begin{enumerate}[label = (\roman*)]
    \item $n \beta_1 \leq M^2$,
    \item $n \beta_1 \geq M^2$ and $n \beta_2 \leq M^2$,
    \item $n \beta_2 \geq M^2$. 
\end{enumerate}
In case (i) we take the first value in the minimum. The contribution of this case to~\eqref{equ:3.5} can then be bounded above by
\begin{align*} & \ll \sum_{a_2 \leq a_1 \leq 2M} \frac{j^2}{a_1 M^3} \sum_{n \leq \frac{M^2}{\beta_1}} \biggl( \frac{n \beta_1^{1/2} \beta_2^{1/2}}{M} + \frac{\beta_1^{1/2} M}{\beta_2^{1/2}} + \frac{M^3}{n \beta_1^{1/2} \beta_2^{1/2}} \biggr) \\
& \ll \sum_{a_2 \leq a_1 \leq 2M} \frac{j^2 \log N}{a_1 \beta_1^{1/2} \beta_2^{1/2}} = \sum_{a_2 \leq a_1 \leq 2M} \frac{j \log N}{a_1^{3/2} a_2^{1/2} \bigl[\frac{d_1}{(a_1, d_1)}, \frac{d_2}{(a_2, d_2)}\bigr]} \, . \end{align*}
The contribution of this to the larger sum~\eqref{equ:3.4} is
\begin{align*}
& \ll \frac{1}{{(\log N)}^{4 + 2 \varepsilon}} \sum_{d_1, d_2 \leq 2M} d_1 d_2 \sum_{\substack{q \leq 4 \pi \sqrt{N} \\ q \equiv 0 \Mod [d_1,d_2]}} \frac{1}{{\varphi(q)}^2} \sum_{a_2 \leq a_1 \leq 2M} \frac{1}{{a}_{1}^{3/2} {a}_{2}^{1/2} \bigl[\frac{d_1}{(a_1, d_1)}, \frac{d_2}{(a_2, d_2)}\bigr]} \sum_{j \leq \frac{\sqrt{N}}{q}} \frac{1}{j} \\
& \ll \frac{{(\log \log N)}^2}{{(\log N)}^{3 + 2 \varepsilon}} \sum_{d_1, d_2 \leq 2M} \frac{d_1 d_2}{{[d_1,d_2]}^2} \sum_{a_2 \leq a_1 \leq 2M} \frac{1}{a_1^{3/2} a_2^{1/2} \bigl[\frac{d_1}{(a_1, d_1)}, \frac{d_2}{(a_2, d_2)}\bigr]} \, .
\end{align*}
This bound is similar to~\eqref{equ:3.2}, and we again use the substitutions $g_i = (a_i, d_i)$, $D_i = d_i/g_i$ and $A_i = a_i/g_i$, dropping the coprime conditions as we did previously. We deduce that the previous display is
\[ \ll \frac{{(\log \log N)}^2}{{(\log N)}^{3 + 2\varepsilon}} \sum_{g_1 \leq 2M} \frac{1}{g_1^{5/2}} \sum_{A_1 \leq \frac{2M}{g_1}} \frac{1}{A_1^{3/2}} \sum_{g_2 \leq A_1 g_1} \frac{1}{g_2^{3/2}} \sum_{A_2 \leq \frac{A_1 g_1}{g_2}} \frac{1}{A_2^{1/2}} \sum_{\substack{D_1 \leq \frac{2M}{g_1} \\ D_2 \leq \frac{2M}{g_2}}} \frac{(D_1, D_2) {{(D_1 g_1, D_2 g_2)}^2}}{D_1^2 D_2^2}, \]
and summing over $A_2$ gives
\[ \ll \frac{{(\log \log N)}^2}{{(\log N)}^{3 + 2\varepsilon}} \sum_{g_1 \leq 2M} \frac{1}{g_1^{2}} \sum_{A_1 \leq \frac{2M}{g_1}} \frac{1}{A_1} \sum_{g_2 \leq A_1 g_1} \frac{1}{g_2^2} \sum_{\substack{D_1 \leq \frac{2M}{g_1} \\ D_2 \leq \frac{2M}{g_2}}} \frac{(D_1, D_2) {{(D_1 g_1, D_2 g_2)}^2}}{D_1^2 D_2^2} \ll \frac{{(\log \log N)}^2}{{(\log N)}^{1 + 2\varepsilon}}, \]
which follows from the proof of Lemma~\ref{l:novariation}, seeing as this term is the same as the sum in~\eqref{equ:3.3}. Therefore the contribution to~\eqref{equ:3.4} from case (i) is $o_{\varepsilon} \bigl( \frac{1}{\log N} \bigr)$. \\ Moving on, in case (ii) we take the second value in the minimum in~\eqref{equ:3.5}. The contribution from this case can be bounded above by
\begin{align*} & \ll \sum_{a_2 \leq a_1 \leq 2M} \frac{j^2}{a_1 \beta_1 M} \sum_{\frac{M^2}{\beta_1} \leq n \leq \frac{M^2}{\beta_2}} \biggl( \frac{\beta_1^{1/2} \beta_2^{1/2}}{M} + \frac{\beta_1^{1/2} M}{n \beta_2^{1/2}} + \frac{M^3}{n^2 \beta_1^{1/2} \beta_2^{1/2}} \biggr) \\
& \ll \sum_{a_2 \leq a_1 \leq 2M} \frac{j^2 \log N}{a_1 \beta_1^{1/2} \beta_2^{1/2}} = \sum_{a_2 \leq a_1 \leq 2M} \frac{j \log N}{a_1^{3/2} a_2^{1/2} \bigl[\frac{d_1}{(a_1, d_1)}, \frac{d_2}{(a_2, d_2)}\bigr]} \, , \end{align*}
This is identical to the contribution from case (i), so everything goes through similarly. \\ Finally, in case (iii), we take the last value in the minimum. The contribution to~\eqref{equ:3.5} is
\begin{align*} & \ll \sum_{a_2 \leq a_1 \leq 2M} \frac{j^2 M}{a_1 \beta_1 \beta_2} \sum_{\frac{M^2}{\beta_2} \leq n \leq \frac{2M}{a_1 \bigl[\frac{d_1}{(a_1,d_1)}, \frac{d_2}{(a_2,d_2)}\bigr]}} \biggl( \frac{\beta_1^{1/2} \beta_2^{1/2}}{n M} + \frac{\beta_1^{1/2} M}{n^2 \beta_2^{1/2}} + \frac{M^3}{n^3 \beta_1^{1/2}\beta_2^{1/2}} \biggr).
\end{align*}
Again, we find that this satisfies the same bound of
\[ \ll \sum_{a_2 \leq a_1 \leq 2M} \frac{j \log N}{a_1^{3/2} a_2^{1/2} \bigl[\frac{d_1}{(a_1, d_1)}, \frac{d_2}{(a_2, d_2)}\bigr]}, \]
so this last case is also handled similarly. Having show that~\eqref{equ:3.4} is $o_\varepsilon \bigl( \frac{1}{\log N} \bigr)$, the proof is completed.
\end{proof}
\subsection{Completion of the proof of Theorem~\ref{T:ub}}\label{s:completionoft2}
The theorem is a straightforward consequence of Proposition~\ref{p:mainprop}. Contrary to section~\ref{S:LB}, here we let $\tilde{\E}$ and $\tilde{\p}$ denote the expectation and probability conditioned on ${\bigl( f(p) \bigr)}_{p \leq N^{0.2}}$ respectively.
\begin{proof}[Proof of Theorem~\ref{T:ub}]
Let $\mathcal{A}$ be the event that
\[ \max_{\theta \in \mathcal{D}} \frac{1}{N} \sum_{N^{0.8} \leq p \leq N} \Bigl| \sum_{m \leq N/p} f(m) \e (mp\theta) \Bigr|^2 \leq C {{(\log N)}^{5/2 + \varepsilon}}, \]
where $C>0$ is the absolute constant from Proposition~\ref{p:mainprop}, so that for any $\varepsilon > 0$, we have ${\p} ( \mathcal{A}^c) = o_{\varepsilon} (1)$. As mentioned in section~\ref{sb:uboutline}, to prove Theorem~\ref{T:ub}, it suffices to prove the corresponding result where the maximum is taken only over $\mathcal{D}$, where $\mathcal{D}$ is as defined at the beginning of section~\ref{s:cvb}. Starting with
\[ \p \Bigl( \max_{\theta \in \mathcal{D}} \frac{1}{\sqrt{N}}\Bigl| \sum_{\substack{n \leq N \\ P(n) \geq N^{0.8}}} f(n) \e (n \theta) \Bigr| > {(\log N)}^{7/4 + \varepsilon} \Bigr), \]
we partition the probability space according to the event $\mathcal{A}$ and apply the union bound followed by a conditional Markov's inequality with $2k$-th moments (for $k \in \N$). This bounds the above quantity by
\begin{align*}
\leq \, & \p \Bigl( \max_{\theta \in \mathcal{D}} \frac{1}{\sqrt{N}}\Bigl| \sum_{\substack{n \leq N \\ P(n) \geq N^{0.8}}} f(n) \e (n \theta) \Bigr| > {(\log N)}^{7/4 + \varepsilon} \cap \mathcal{A} \Bigr) + \p (\mathcal{A}^c) \\
\leq \, & \sum_{\theta \in \mathcal{D}} \frac{1}{{(\log N)}^{(7/2 + 2\varepsilon) k}} \E \Bigl| \mathbf{1}_{\mathcal{A}} \frac{1}{\sqrt{N}} \sum_{\substack{n \leq N \\ P(n) \geq N^{0.8}}} f(n) \e (n \theta) \Bigr|^{2k} + o_{\varepsilon} (1) \\
\leq \, & \sum_{\theta \in \mathcal{D}} \frac{1}{{(\log N)}^{(7/2 + 2\varepsilon) k}} \E \Bigl[\mathbf{1}_{\mathcal{A}}\tilde{\E} {\Bigl| \frac{1}{\sqrt{N}} \sum_{N^{0.8} < p \leq N} f(p) \sum_{m \leq N/p} f(m) \e (mp \theta) \Bigr|}^{2k} \Bigr] + o_{\varepsilon} (1) .
\end{align*}
For each $k$-tuple of primes $(p_1,\ldots,p_k)$, there are $k!$ ways to choose $(p_{k+1},\ldots,p_{2k})$ so that these tuples have equal products. Therefore we have $\tilde{\E} {\bigl| \frac{1}{\sqrt{N}} \sum_{N^{0.8} < p \leq N} f(p)a_p \bigr|}^{2k} = k! {\bigl( \frac{1}{N} \sum_{N^{0.8} < p \leq N} |a_p|^{2} \bigr)}^k $. Seeing as $k! \leq k^k$, the previous display is
\begin{align*}
\leq \, & \sum_{\theta \in \mathcal{D}} \frac{k^k}{{(\log N)}^{(7/2 + 2\varepsilon) k}} \E \Biggl[\mathbf{1}_{\mathcal{A}} {\Bigl( \frac{1}{N} \sum_{N^{0.8} < p \leq N} \Bigl| \sum_{m \leq N/p} f(m) \e (mp \theta) \Bigr|^{2} \Bigr)}^k \Biggr] + o_{\varepsilon} (1) \\
\ll \, & \sum_{\theta \in \mathcal{D}} {\biggl( \frac{C k}{{(\log N)}^{1 + \varepsilon}}\biggr)}^k + o_{\varepsilon} (1).
\end{align*}
Taking $k = \lceil \log N \rceil$ and noting that $| \mathcal{D}| \ll N$, this is $o_{\varepsilon} (1)$, completing the proof of Theorem~\ref{T:ub}.
\end{proof}
\subsubsection*{Acknowledgements}
The author would like to thank his supervisor, Adam Harper, for many useful discussions and for carefully reading an earlier version of this work.
\subsubsection*{Rights Retention}
For the purpose of open access, the author has applied a Creative Commons Attribution (CC-BY) licence to any Author Accepted Manuscript version arising from this submission.
\printbibliography
\end{document}